\theoremstyle{plain}
\newtheorem{theorem}{Theorem}[section]
\newtheorem{lemma}[theorem]{Lemma}
\theoremstyle{definition}
\newtheorem{example}[theorem]{Example}
\numberwithin{equation}{section}
\def\be{\begin{equation}}
\def\ee{\end{equation}}
\begin{document}

\title[Uniformly Degenerate Elliptic Equations]
{Optimal Boundary Regularity\\ for Uniformly Degenerate Elliptic Equations}

\author[Han]{Qing Han}
\address{Department of Mathematics\\
University of Notre Dame\\
Notre Dame, IN 46556, USA} \email{qhan@nd.edu}
\author[Xie]{Jiongduo Xie}
\address{Beijing International Center for Mathematical Research\\
%School of Mathematical Sciences\\
Peking University\\
Beijing, 100871, China}
\email{2001110018@stu.pku.edu.cn}

%\begin{abstract}
%In this survey paper, we study the optimal regularity of solutions to 
%uniformly degenerate elliptic equations in bounded domains and 
%establish the H\"older continuity of solutions and their derivatives up to the boundary. 
%\end{abstract}

%\thanks{The first author acknowledges the support of NSF Grant DMS-2305038
%and the second author acknowledges the support of 
%National Key R\&D Program of China Grant 2020YFA0712800.}
%\date{\today}
\maketitle

%\centerline{\it \small Dedicated to Professor Nikolai Nadirashvili for his 70th birthday} 

\section{Introduction}\label{sec-OptimalRegularity-linear-intro} 

In this survey paper, we study the optimal regularity of solutions to 
uniformly degenerate elliptic equations in bounded domains and 
establish the H\"older continuity of solutions and their derivatives 
up to the boundary. 

To introduce uniformly degenerate elliptic equations, 
we briefly review the Loewner-Nirenberg problem in the Euclidean space setting.  
Let $\Omega\subseteq \mathbb{R}^{n}$ be a bounded domain, for  $n\ge 3$. 
In the Loewner-Nirenberg problem, we will find a positive solution of
\begin{align}
\label{eq-ch1-LN-MainEq}\begin{split}
\Delta u  &= \frac14n(n-2) u^{\frac{n+2}{n-2}} \quad\text{in }\Omega,\\
u&=\infty\quad\text{on }\partial \Omega.
\end{split}\end{align}
Geometrically, $u^{\frac{4}{n-2}}g_E$
is a complete metric with a constant scalar curvature $-n(n-1)$ in $\Omega$, 
where $g_E$ is the flat metric in the Euclidean space $\mathbb R^n$. 

In a pioneering work,
Loewner and Nirenberg \cite{Loewner&Nirenberg1974}
studied the existence of solutions of \eqref{eq-ch1-LN-MainEq}  
and 
characterized the blowup rate of solutions near the boundary. Specifically, 
let $\Omega$ be a bounded domain with a $C^{2}$-boundary and 
$\rho$ be a $C^{2}(\bar\Omega)$-defining function of $\Omega$; 
namely, $\rho$ satisfies $\rho>0$ in $\Omega$ and $\rho=0$ and
$|\nabla\rho|=1$ on $\partial\Omega$. 
Then,  
\eqref{eq-ch1-LN-MainEq} 
admits a unique positive solution $u\in C^\infty(\Omega)$ and
\begin{equation}\label{eq-ch1-basic-estimate-LN}
| \rho^{\frac{n-2}{2}}u-1|\leq C\rho\quad\text{in }\Omega,\end{equation} 
where $C$
is a positive constant depending only on $n$ and $\Omega$.
In particular, $u$ blows up near $\partial\Omega$ at the rate $\rho^{-(n-2)/2}$. 

In view of the estimate \eqref{eq-ch1-basic-estimate-LN}, we introduce the function $v$ such that 
\begin{equation}\label{eq-ch1-LN-Relation} 
u=\rho^{-\frac{n-2}2}(1+v).\end{equation}
A straightforward computation implies that $v$ satisfies 
\begin{align}\label{eq-ch1-LN-NonlinearOperator}\begin{split}
&\rho^2\Delta v-(n-2)\rho\nabla \rho\cdot \nabla v
-\Big[n+\frac14n(n-2)(1-|\nabla\rho|^2)+\frac12(n-2)\rho\Delta \rho \Big]v\\
&\qquad\qquad=F(v)
\quad\text{in }\Omega,
\end{split}\end{align}
where
\begin{align}\label{eq-ch1-LN-NonlinearOperator-1}\begin{split}
F(v)&=\frac12(n-2)\rho\Delta \rho+\frac14n(n-2)(1-|\nabla \rho|^2)
\\&\qquad
+\frac14n(n-2)\Big[(1+v)^{\frac{n+2}{n-2}}-1-\frac{n+2}{n-2}v\Big].
\end{split}\end{align}
By the estimate \eqref{eq-ch1-basic-estimate-LN}, we have 
\begin{align*}
|v|\le C\rho\quad\text{in }\Omega.\end{align*}
In particular, $v$ is continuous up to the boundary and $v=0$ on $\partial\Omega$.

The equation \eqref{eq-ch1-LN-NonlinearOperator} is a semilinear elliptic equation. 
In the expression of $F$ given by \eqref{eq-ch1-LN-NonlinearOperator-1}, 
the first two terms are considered to be 
nonhomogeneous terms, and the third term is a nonlinear expression of $v$, 
{\it without} linear terms. 
The left-hand side of \eqref{eq-ch1-LN-NonlinearOperator} 
consists of all linear terms of $v$, with 
a factor $\rho^2$ in all terms of the second derivatives of $v$ 
and a factor $\rho$ in all terms of the first derivatives of $v$. 
The factor $\rho^2$ for the second derivatives of $v$ 
causes degeneracy of the linear operator in the left-hand side
of \eqref{eq-ch1-LN-NonlinearOperator}. 
The degeneracy of this type is referred to as the uniform degeneracy. 

Uniformly degenerate elliptic equations appear or can be uncovered as above
in many problems. % and have been extensively studied. 
In addition to the Loewner-Nirenberg problem and its nonlinear versions \cite{Loewner&Nirenberg1974,AvilesMcOwen1988,Mazzeo1991,ACF1992CMP, Kichenassamy2004JFA, Kichenassamy2005JFA, GonzalezLi2018, LiNguyenXiong2023}, 
these problems include 
complex Monge-Amp\`ere equations
\cite{Fefferman1976, ChengYau1980CPAM, LeeMelrose1982, HanJiang2024}, 
affine hyperbolic spheres 
\cite{ChengYau1977,JianWang2013JDG,JianWangZhao2017JDE,JianLuWang2022China}, 
the Bergman Laplacian \cite{Graham1983-1, Graham1983-2}, 
minimal graphs in the hyperbolic space  
\cite{Lin1989Invent,Tonegawa1996MathZ,Graham&Witten1999,Lin2012Invent,HanShenWang16CalVar,HanJiang2023}, 
singular stochastic control problems 
\cite{LasryLions1989, LeonoriPorretta2011, JianLuWang2022China}, 
conformally compact Einstein metrics
\cite{GrahamLee1991, Lee1995, OB2000, Graham2000, OB2002, Chrusciel2005, Lee2006, Hellimell2008, OB2016}, 
and proper harmonic maps between hyperbolic spaces and other spaces
\cite{LiTam1991,LiTam1993,LiTam1993Ind,Donnelly1994,LiNi2000,LiSimon2007,Yin2007,ChenLi2023}. 
Many of these topics have been extensively studied. 
A few references here are closely related to the existence and regularity. 
After introducing the degenerate elliptic equation for the graphs of the minimal surfaces in the hyperbolic space, 
Lin \cite{Lin1989Invent} wrote, \lq\lq It may suggest a general study of linear or non-linear degenerate elliptic equations. 
For example, one would like to know for which class of such equations the solutions are as smooth as the boundary data. 
In the case that the loss in derivatives does occur, one would like to know exactly how much is lost."

The study of the regularity of the solutions to the uniformly degenerate elliptic equations
consists of two parts. Take \eqref{eq-ch1-LN-NonlinearOperator} for an example. 
In the first part, we study the optimal regularity up to a certain level 
under an appropriate regularity assumption of the boundary $\partial\Omega$. 
Here, the optimality refers to the fact that the regularity of solutions cannot be improved 
beyond a level determined by the equation 
even if the boundary $\partial\Omega$ is smooth. 
This is a special feature of the uniformly degenerate elliptic equations. 
Graham and Lee \cite{GrahamLee1991} studied the existence and regularity in weighted H\"older spaces.
In the second part, we study behaviors of solutions near the boundary 
beyond the optimal regularity level in the form of the polyhomogeneous expansions. 
Such expansions were originally established by the method of microlocal analysis by Melrose \cite{Melrose1981}, Lee and Melrose \cite{LeeMelrose1982}, 
Mazzeo and Melrose \cite{MazzeoMelrose1987}, and Mazzeo \cite{Mazzeo1991}.

In the study of the optimal regularity, 
the structure of the linear operators plays a significant role. 
In this survey paper, we will present an outline of such study 
for uniformly degenerate linear elliptic equations by 
elliptic PDE methods. 
Our arguments are modified and simplified from earlier approaches 
for the aforementioned nonlinear problems. 
We will formulate our regularity and existence results for functions in the standard 
H\"older spaces, with no additional growth or decay assumptions on nonhomogeneous terms 
or solutions. 
These results form the Schauder theory for uniformly degenerate elliptic equations.

{\it Acknowledgements:} The first author acknowledges the support of NSF Grant DMS-2305038.
The second author acknowledges the support of 
National Key R\&D Program of China Grant 2020YFA0712800.

\section{Main Results}\label{sec-Main-Results}

Let $\Omega$ be a bounded domain in $\mathbb R^n$. 
For a uniformly degenerate linear elliptic operator $L$ to be introduced, we consider the Dirichlet problem 
for the equation 
\begin{equation}\label{eq-ch2-basic-equation}
Lu=f\quad\text{in }\Omega.\end{equation}
Under the standard H\"older regularity assumption on $f$, 
we will study the regularity of solutions 
of the Dirichlet problem. We will not impose any additional assumptions on $f$, such as 
growth or decay near the boundary. 

We first introduce a notion of defining functions. 
Let $\Omega$ be a bounded domain in $\mathbb R^n$ with a $C^{1}$-boundary $\partial\Omega$ 
and $\rho$ be a $C^{1}(\bar\Omega)$-function. 
Then, $\rho$ is a {\it defining function} of $\Omega$
if $\rho>0$ in $\Omega$ and $\rho=0$ and $\nabla \rho\neq 0$ on $\partial\Omega$. We always require 
$$|\nabla\rho|=1\quad\text{on }\partial\Omega.$$ Then, $\nabla \rho|_{\partial\Omega}$ is the inner unit normal vector along $\partial\Omega$. 
In the following, we will fix a defining function $\rho$, with possibly a higher regularity.

Consider the operator 
\begin{align}\label{eq-ch2-def-operator}
L=\rho^2a_{ij}\partial_{ij}+\rho b_i\partial_i+c\quad\text{in }\Omega,\end{align}
where $a_{ij}$, $b_i$, and $c$ are continuous functions in $\bar\Omega$ satisfying $a_{ij}=a_{ji}$ and, 
for any $x\in \Omega$
and $\xi\in\mathbb R^n$, 
\begin{align}\label{eq-ch2-ellipticity}
\lambda|\xi|^2\le a_{ij}(x)\xi_i\xi_j\le\Lambda|\xi|^2,\end{align}
for some positive constants $\lambda$ and $\Lambda$. 
We note that the operator $L$ is not uniformly elliptic in $\bar \Omega$ and, in fact, is 
degenerate along the boundary $\partial\Omega$, due to the presence of the factor $\rho^2$ in the 
second-order terms. The operator $L$ is called to be {\it uniformly degenerate elliptic}. 

We first examine an example. 

\begin{example}\label{exa-ch2-solutions-not-regular} 
Let $\Omega$ be a bounded domain in $\mathbb R^n$, 
with a $C^\infty$-boundary $\partial\Omega$ and a $C^\infty(\bar\Omega)$-defining function $\rho$. 
Consider constants $a, b, c$, and $s$, with $a>0$ and $s>0$, such that 
\begin{equation}\label{eq-ch2-root-condition}
as(s-1)+bs +c=0.\end{equation}
Set 
\begin{align}\label{eq-ch2-def-operator-example}L=a\rho^2\Delta+b\rho\nabla\rho\cdot\nabla+c.\end{align}
In other words, we take $a_{ij}=a\delta_{ij}$ and $b_i=b\partial_i\rho$ in \eqref{eq-ch2-def-operator}. 
We now consider three cases. 

\smallskip 
{\it Case 1: Singular monomial factors.} We assume that $s$ is not 
an integer.  
Take a function $\psi\in C^\infty(\bar\Omega)$. 
A straightforward calculation implies
\begin{align*} L(\psi\rho^s)=f,\end{align*}
where 
\begin{align}\label{eq-ch2-example-expression-f}\begin{split}
f&=\rho^s\big\{\big[as(s-1)+bs+c\big]\psi+\big[as(s-1)+bs\big](|\nabla \rho|^2-1)\psi\\
&\qquad+\big[as\psi\Delta\rho+(2as+b)\nabla\psi\cdot\nabla\rho\big]\rho+a\Delta\psi\rho^{2}\big\}.\end{split}\end{align}
All functions in the right-hand side are smooth except $\rho^s$. 
Note that the first term in the right-hand side of \eqref{eq-ch2-example-expression-f} is zero 
by \eqref{eq-ch2-root-condition}.  
By $|\nabla \rho|^2=1$ on $\partial\Omega$,  
we can write $f=\eta\rho^{s+1}$, 
for some function $\eta\in C^\infty(\bar\Omega)$. 
In summary, $u=\psi\rho^s$ is a solution of \eqref{eq-ch2-basic-equation}, for some $f=\eta\rho^{s+1}$. Write 
$s=k+\alpha$, for some integer $k\ge 0$ and constant $\alpha\in (0,1)$. 
For such $u$ and $f$, we have $f\in C^{k+1, \alpha}(\bar\Omega)$ and  
$u\in C^{k,\alpha}(\bar\Omega)$, but $u\notin C^{k,\beta}(\bar\Omega)$, 
for any $\beta\in (\alpha,1)$. 

In fact, by choosing $\psi$ appropriately, we can improve the regularity 
of $f$ in \eqref{eq-ch2-example-expression-f}. For a function 
$\psi_0\in C^\infty(\bar\Omega)$ and a positive integer $m$, consider 
$$\psi=\sum_{l=0}^m\psi_l\rho^l.$$
We now assume in addition that $a\mu(\mu-1)+b\mu +c\neq 0$, for $\mu=s+1, \cdots, s+m$. 
Substituting $\psi$ in \eqref{eq-ch2-example-expression-f} and choosing $\psi_1, \cdots, \psi_m$ successively, 
we can write $f=\eta \rho^{s+m+1}$ for some  $\eta\in C^\infty(\bar\Omega)$. Hence, 
$f\in C^{k+m+1, \alpha}(\bar\Omega)$ and  
$u\in C^{k,\alpha}(\bar\Omega)$, but $u\notin C^{k,\beta}(\bar\Omega)$, 
for any $\beta\in (\alpha,1)$. 

\smallskip 
{\it Case 2: Singular logarithmic factors.} We assume that $s$ is an integer.  
Take a function $\psi\in C^\infty(\bar\Omega)$. 
A straightforward calculation implies
\begin{align*} L(\psi\rho^s\log\rho)=f,\end{align*}
where
\begin{align}\label{eq-ch2-example-expression-f-integer}\begin{split}
f&=\rho^s\log \rho\big\{\big[as(s-1)+bs+c\big]\psi+\big[as(s-1)+bs\big](|\nabla \rho|^2-1)\psi\\
&\qquad+\big[as\psi\Delta\rho+(2as+b)\nabla\psi\cdot\nabla\rho\big]\rho
+a\Delta\psi\rho^{2}\big\}\\
&\qquad +\rho^{s}\big\{\big[a(2s-1)+b\big]\psi|\nabla\rho|^2
+\big[a\psi\Delta\rho+2a\nabla\psi\cdot\nabla\rho\big]\rho\big\}.\end{split}
\end{align}
All functions in the right-hand side are smooth except $\log\rho$. 
Note that the first term in the right-hand side 
is zero 
by \eqref{eq-ch2-root-condition}.  
By $|\nabla \rho|^2=1$ on $\partial\Omega$, 
we can write $f=\eta_1\rho^{s+1}\log\rho+\eta_2$, 
for some functions $\eta_1, \eta_2\in C^\infty(\bar\Omega)$. 
In summary, $u=\psi\rho^s\log \rho$ is a solution of \eqref{eq-ch2-basic-equation}, 
for the above $f$. For such $u$ and $f$, we have 
$f\in C^{s, \alpha}(\bar\Omega)$ and  
$u\in C^{s-1,\alpha}(\bar\Omega)$ for any $\alpha\in (0,1)$, but $u\notin C^{s}(\bar\Omega)$.
Proceeding similarly as in Case 1, we can raise the regularity of $f$ but maintain the regularity of $u$. 

\smallskip 
{\it Case 3: Singular monomial and logarithmic factors.} 
We assume that $s$ is not an integer and reexamine Case 2.
With $s=k+\alpha$, for some integer $k\ge 0$ and constant $\alpha\in (0,1)$,
and $u$ and $f$ as in Case 2, we have $f\in C^{k,\alpha}(\bar\Omega)$
and $u\in C^{k,\beta}(\bar\Omega)$ for any $\beta\in (0,\alpha)$, 
but $u\notin C^{k,\alpha}(\bar\Omega)$. Note that 
$f\notin C^{k,\gamma}(\bar\Omega)$ for any $\gamma\in (\alpha,1)$, if $a(2s-1)+b\neq0$.
\end{example} 

In Case 1 and Case 2, the regularity of the nonhomogeneous terms $f$
can be improved to orders as large as we desire. However, in Case 3, 
the regularity of $f$ is kept at a certain order.

In view of Example \ref{exa-ch2-solutions-not-regular}, we introduce two important concepts, 
characteristic polynomials and characteristic exponents.  
Take an arbitrary $\mu\in\mathbb R$. A straightforward computation yields 
$$\rho^{-\mu}L\rho^\mu=\mu(\mu-1)a_{ij}\partial_i\rho\partial_j\rho+\mu b_i\partial_i\rho+c
+\mu\rho a_{ij}\partial_{ij}\rho.$$
Note that $\nabla \rho|_{\partial\Omega}$ is the inner unit normal 
vector along $\partial\Omega$. 
Under the extra condition that $\rho D^2\rho\in C(\bar\Omega)$ with 
$\rho D^2\rho=0$ on $\partial\Omega$, we have 
$$\rho^{-\mu}L\rho^\mu\big|_{\partial\Omega}=\mu(\mu-1)a_{ij}\nu_i\nu_j
+\mu b_i\nu_i+c,$$
where $\nu=(\nu_1, \cdots, \nu_n)$ is the inner unit normal along $\partial\Omega$. 
We now define 
\begin{align}\label{eq-ch2-definition-P-mu}
P(\mu)=\mu(\mu-1)a_{ij}\nu_i\nu_j
+\mu b_i\nu_i+c
\quad\text{on }\partial\Omega.\end{align}
For each fixed point on $\partial\Omega$, $P(\mu)$ is a quadratic polynomial in $\mu$ 
with a positive leading coefficient. The polynomial $P(\mu)$ is referred to as the 
{\it characteristic polynomial} of $L$ and its roots as {\it characteristic exponents}
or {\it indicial roots}. 

For the operator $L$ in \eqref{eq-ch2-def-operator-example}, 
we have $a_{ij}=a\delta_{ij}$ and $b_i=b\partial_i\rho$ if we put it in the form \eqref{eq-ch2-def-operator}. 
Hence, the characteristic polynomial of $L$ is given by 
$$P(\mu)=a\mu(\mu-1)+b\mu +c.$$
By \eqref{eq-ch2-root-condition}, $s$ is a positive root of $P(\mu)$. 

Example \ref{exa-ch2-solutions-not-regular} demonstrates that positive characteristic exponents serve as an obstruction 
to the boundary regularity of solutions $u$, regardless of the regularity of $f$. 
Different types of singularity arise depending on whether the positive characteristic exponents are integers. 
If they are integers, solutions have logarithmic factors. 
If they are not integers, solutions have forms of monomials of non-integer powers 
or monomials of non-integer powers coupled with logarithmic factors, 
depending on the regularity of nonhomogeneous terms.

Now, we introduce some conditions on the boundary to offset the effect of the degeneracy of the operator. First, 
we assume 
$$P(0)=c<0\quad\text{on }\partial\Omega.$$ 
Hence, the characteristic polynomial $P(\mu)$ has two real roots, a positive one and a negative one,
and any $\mu$ with a negative value of $P(\mu)$ 
has to be between these two roots. For some integer $k\ge 0$ and constant $\alpha\in (0,1)$, a general regularity 
of $C^{k,\alpha}$ for solutions is possible only under the condition 
$P({k+\alpha})<0$ on $\partial\Omega$.
This is clear from Example \ref{exa-ch2-solutions-not-regular}.

Let $f$ be a continuous function in $\bar\Omega$. Consider the Dirichlet problem given by 
\eqref{eq-ch2-basic-equation} and 
\begin{align}
\label{eq-ch2-Dirichlet}
u=\frac{f}{c}\quad\text{on }\partial\Omega. 
\end{align}
The boundary value $f/c$ in \eqref{eq-ch2-Dirichlet} is not arbitrarily prescribed and 
is determined by the equation itself, specifically by the ratio of $f$ and $c$. This is due to the degeneracy along the 
boundary. In fact, normal derivatives of solutions on the boundary up to a certain order are also determined by the equation. 

Our main focus in this paper is the regularity of solutions 
of the Dirichlet problem \eqref{eq-ch2-basic-equation} and \eqref{eq-ch2-Dirichlet}. 
If $a_{ij}$, $b_i$, $c$, and $f$ are $C^{k,\alpha}(\bar\Omega)$-functions, 
for some integer $k\ge 0$ and some 
constant $\alpha\in (0,1)$, then the solution $u$ of \eqref{eq-ch2-basic-equation} and \eqref{eq-ch2-Dirichlet}
is a $C^{k+2,\alpha}(\Omega)$-function. This follows from the interior Schauder theory. 
However, the global Schauder theory cannot be applied since the operator $L$ is 
degenerate along the boundary. 
The main result in this paper concerns the regularity of solutions up to the boundary.

\begin{theorem}\label{thrm-ch3-regularity-higher}
For some integer $k\ge 0$ and constant $\alpha\in (0,1)$, 
let $\Omega$ be a bounded domain in $\mathbb R^n$
with a $C^{k+1,\alpha}$ boundary $\partial\Omega$ and 
$\rho$ be a $C^{k+1, \alpha}(\bar\Omega)$-defining function with 
$\rho\nabla^{k+2}\rho\in C^{\alpha}(\bar\Omega)$ and
$\rho\nabla^{k+2}\rho=0$ on $\partial\Omega$. 
Assume $a_{ij}, b_i, c\in C^{k,\alpha}(\bar\Omega)$, 
with \eqref{eq-ch2-ellipticity}, 
and $c\le -c_0$ and $P({k+\alpha})\le -c_{k+\alpha}$ on $\partial\Omega$, 
for some positive constants $c_0$ and $c_{k+\alpha}$. 
For some $f\in C^{k,\alpha}(\bar\Omega)$, 
suppose $u\in C(\bar\Omega)\cap C^{2}(\Omega)$
is a solution of the Dirichlet problem \eqref{eq-ch2-basic-equation} and \eqref{eq-ch2-Dirichlet}.
Then, $u\in C^{k,\alpha}(\bar\Omega)\cap C^{k+2,\alpha}(\Omega)$, 
$\rho\nabla^{k+1} u, \rho^2\nabla^{k+2}u\in C^{\alpha}(\bar\Omega)$, 
and $\rho\nabla^{k+1} u=0$ and $\rho^2\nabla^{k+2}u=0$ on $\partial\Omega$. Moreover, 
\begin{align}\label{eq-ch3-estimate-solution-Holder-higher}
|u|_{C^{k,\alpha}(\bar\Omega)}
+|\rho\nabla^{k+1}u|_{C^{\alpha}(\bar\Omega)}+ |\rho^2\nabla^{k+2}u|_{C^{\alpha}(\bar\Omega)}
\le C\big\{|u|_{L^\infty(\Omega)}+|f|_{C^{k,\alpha}(\bar\Omega)}\big\},
\end{align}
where  $C$ is a positive constant depending only on $n$, $\lambda$, 
$k$, $\alpha$, $c_0$, $c_{k+\alpha}$, $\Omega$, 
the $C^{k+1,\alpha}$-norm of $\rho$ in $\bar\Omega$, 
the $C^{\alpha}$-norm of $\rho\nabla^{k+2}\rho$ in $\bar\Omega$, 
and the $C^{k,\alpha}$-norms of $a_{ij}$, $b_i$, and $c$ in $\bar\Omega$. 
\end{theorem}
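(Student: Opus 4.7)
The plan is to argue by induction on $k$, combining a barrier tailored to the indicial polynomial with rescaled interior Schauder estimates, and propagating regularity through tangential differentiation together with algebraic inversion of the equation in the normal direction.

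First I would localize and flatten the boundary. A $C^{k+1,\alpha}$ diffeomorphism straightens $\partial\Omega$, reducing to the case where $\Omega$ is locally the upper half-space with $\rho=x_n$; the assumption that $\rho\nabla^{k+2}\rho\in C^\alpha(\bar\Omega)$ vanishes on $\partial\Omega$ is exactly what is needed for this change of variables to preserve the uniformly degenerate form, with $C^{k,\alpha}$ coefficients and an unchanged characteristic polynomial $P$. Since $P$ is quadratic with positive leading coefficient and $P(0)<0$, the assumption $P(k+\alpha)<0$ automatically propagates to $P(j+\alpha)<0$ for every $0\le j\le k$, so the inductive hypothesis transfers to each intermediate level.

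For the base case $k=0$, I would first reduce to zero Dirichlet data by subtracting $f/c$. Exploiting $c\le-c_0$ and $P(\alpha)\le-c_\alpha<0$, a direct computation shows $L(x_n^\alpha)\le-\tfrac{c_\alpha}{2}x_n^\alpha$ near the boundary, so $Mx_n^\alpha$ serves as a barrier and the comparison principle yields $|u-f/c|\le Cx_n^\alpha$, hence $u\in C^{0,\alpha}(\bar\Omega)$. Rescaled interior Schauder on each ball $B_{x_n/2}(x)$ then produces $|\nabla u(x)|\le Cx_n^{\alpha-1}$ and $|\nabla^2 u(x)|\le Cx_n^{\alpha-2}$, so $x_n\nabla u$ and $x_n^2\nabla^2 u$ decay like $x_n^\alpha$ and vanish on $\partial\Omega$. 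Their Hölder norms up to the boundary follow from a two-scale dichotomy: for two points $x,y$ with $|x-y|\le\tfrac12\min(x_n,y_n)$ the rescaled interior estimate supplies the Hölder quotient, while for $|x-y|\ge\tfrac12\min(x_n,y_n)$ the decay bounds do.

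For the inductive step I would differentiate tangentially. For each $\tau\in\{1,\dots,n-1\}$, the function $v=\partial_\tau u$ satisfies a uniformly degenerate elliptic equation with the \emph{same} indicial polynomial $P$ and right-hand side
\[
g_\tau=\partial_\tau f-(\partial_\tau a_{ij})\,x_n^2\partial_{ij}u-(\partial_\tau b_i)\,x_n\partial_i u-(\partial_\tau c)\,u,
\]
and the compatible boundary value $v=g_\tau/c=\partial_\tau(f/c)$ on $\partial\Omega$. The induction hypothesis at level $k-1$, applied to $u$, $x_n\nabla u$, and $x_n^2\nabla^2 u$, places $g_\tau$ in $C^{k-1,\alpha}(\bar\Omega)$, so re-applying the induction to $v$ controls every derivative of $u$ of total order $\le k+2$ that carries at least one tangential index. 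The purely normal top derivative is then recovered algebraically: solving $x_n^2a_{nn}\partial_n^2 u=f-cu-x_nb_i\partial_i u-x_n^2\sum_{(i,j)\ne(n,n)}a_{ij}\partial_{ij}u$ for $\partial_n^2 u$, differentiating the appropriate number of times in the already-controlled tangential and mixed variables, and invoking $a_{nn}\ge\lambda$ at the boundary, one extracts the Hölder regularity of $x_n^2\partial_n^{k+2}u$ and closes the estimate.

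The step I expect to be the main obstacle is the base-case Hölder continuity of the weighted derivatives $x_n\nabla u$ and especially $x_n^2\nabla^2 u$ across two points at very disparate heights $x_n\ll y_n$, where neither rescaled Schauder on a single ball nor the pointwise decay alone suffices. The resolution I envisage is to subtract a model solution of the frozen-coefficient operator on a half-space neighborhood joining $x$ and $y$, whose boundary behavior is computable explicitly, and to bound the residual via the sign of $P(\alpha)$ using a barrier argument analogous to the one in the base step; the boundary vanishing of $x_n^2\nabla^2 u$ is what keeps the Hölder quotient finite in the regime $x_n\to 0$. Once this boundary Hölder control is established at the base level, the inductive scheme above runs uniformly, since tangential differentiation preserves the indicial inequality and the equation itself supplies the remaining normal derivatives.
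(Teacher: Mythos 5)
Your base case and tangential step track the paper's argument. Two small caveats: the barrier $Mx_n^\alpha$ alone does not dominate $u - u_0(x_0')$ on the lateral boundary of a half-ball near the corner $\partial B'_1 \times \{0\}$, which is exactly why the paper's supersolution in Lemma~\ref{lemma-ch3-Linear-ComparisonFunction-t-factor} carries the extra $(\varepsilon|x'-x_0'|^2 + t^2)^{\alpha/2}$ factor; and since $f/c$ is only $C^\alpha$, one cannot apply $L$ to it, so the subtraction must be of the constant $u_0(x_0')$ rather than of the function $f/c$.

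The genuine gap is the purely normal regularity, which the paper itself singles out as the delicate part. You propose to solve the equation algebraically for $\partial_n^2 u$ and then differentiate. But the coefficient of $\partial_n^2 u$ is $x_n^2 a_{nn}$, so isolating $\partial_n^2 u$ forces a division by $x_n^2$; the quotient is bounded only if the numerator vanishes to second order at $x_n=0$, and establishing that already presupposes the boundary value of $\partial_n u$ — the very quantity to be found. If instead you apply $\partial_n^k$ to the equation and try to isolate $x_n^2 a_{nn}\partial_n^{k+2}u$, the Leibniz rule produces the term $2k\,x_n\,a_{nn}\,\partial_n^{k+1}u$, whose $C^\alpha$-control is part of the theorem's conclusion, not its hypothesis; the argument is circular. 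This is exactly what the authors mean when they say that for uniformly degenerate operators one cannot read the normal derivatives off the equation as in the uniformly elliptic case. The paper resolves this with two ideas absent from your proposal. First, a $C^{1,\alpha}$-type decay estimate (Lemma~\ref{lemma-ch3-Linear-C{1,alpha}Estimate-normal}): subtracting from $u$ a linear Taylor polynomial $l_{x_0'}$ chosen so that $L(u-l_{x_0'})$ vanishes to order $(|x'-x_0'|^2+t^2)^{(1+\alpha)/2}$, and comparing against a barrier of homogeneity $1+\alpha$ (this is where $Q(1+\alpha)<0$ enters), gives $|u-l_{x_0'}|\le C(|x'-x_0'|^2+t^2)^{(1+\alpha)/2}$, hence the existence and continuity of $\partial_t u$ up to the boundary and the decay $|\partial_t u(x',t)-u_1(x')|\le Ct^\alpha$. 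Second, the structural observation that after moving $2ta_{in}\partial_{it}u$ and $b_n\partial_t u$ into the operator, the equation satisfied by $\partial_t u$ has the \emph{shifted} indicial polynomial $Q^{(1)}(\mu)=Q(\mu+1)$; the base regularity theorem then applies to $\partial_t u$ because $Q^{(1)}(\alpha)=Q(1+\alpha)<0$, and iterating uses $Q(m+\alpha)<0$ for all $m\le k$. Your remark that $P(j+\alpha)<0$ propagates down to all $j\le k$ is exactly the needed fact, but it is needed here — for normal differentiation, where the indicial polynomial shifts — not for tangential differentiation, where it is preserved. Without the Taylor-polynomial decay estimate and the indicial shift, the induction does not close.
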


Although formulated as a global result, 
Theorem \ref{thrm-ch3-regularity-higher}
actually holds locally near the boundary. 
We now explain some assumptions in Theorem \ref{thrm-ch3-regularity-higher}. 
By the assumption  
$P(0)=c<0$ on $\partial\Omega$, 
the characteristic polynomial $P(\mu)$ has two real roots, a positive one and a negative one,
and any $\mu$ with a negative value of $P(\mu)$ 
has to be between these two roots. For some integer $k\ge 0$ and constant $\alpha\in (0,1)$, 
the assumption $P({k+\alpha})<0$ on $\partial\Omega$ implies that $k+\alpha$ is less than the positive root
of $P(\mu)$. 
We emphasize that the only assumption on $f$ is on its regularity, i.e., $f\in C^{k,\alpha}(\bar\Omega)$. 
There are no additional assumptions on its growth or decay near the boundary. 

As a consequence of Theorem \ref{thrm-ch3-regularity-higher} with $k=0$ and standard regularization, 
we obtain the following existence result.

\begin{theorem}\label{thrm-ch2-Existence-Dirichlet}
For some constant $\alpha\in (0,1)$, 
let $\Omega$ be a bounded domain in $\mathbb R^n$
with a $C^{1,\alpha}$ boundary $\partial\Omega$ and 
$\rho$ be a $C^{1, \alpha}(\bar\Omega)$-defining function with 
$\rho\nabla^{2}\rho\in C^{\alpha}(\bar\Omega)$ and
$\rho\nabla^{2}\rho=0$ on $\partial\Omega$. 
Assume $a_{ij}, b_i, c\in C^{\alpha}(\bar\Omega)$,  
with \eqref{eq-ch2-ellipticity}, $c\le 0$ in $\Omega$, 
and $c<0$ and $P(\alpha)<0$ on $\partial\Omega$.  
Then, for any  $f\in C^\alpha(\bar\Omega)$, 
the Dirichlet problem \eqref{eq-ch2-basic-equation} and \eqref{eq-ch2-Dirichlet} 
admits a unique solution 
$u\in C^\alpha(\bar\Omega)\cap 
C^{2,\alpha}(\Omega)$, with  $\rho \nabla u$, $\rho^2\nabla^2u\in C^{\alpha}(\bar\Omega)$, 
and $\rho \nabla u=0$ and $\rho^2\nabla^2u=0$ on $\partial\Omega$. 
\end{theorem}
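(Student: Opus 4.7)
The plan is to deduce existence from the a priori estimate \eqref{eq-ch3-estimate-solution-Holder-higher} with $k=0$ by a two-step regularization: first establish existence in the smooth-data case, then pass to the $C^{\alpha}$ case. Uniqueness I would handle first and directly by the maximum principle: the difference $w$ of two solutions lies in $C(\bar\Omega)\cap C^2(\Omega)$, satisfies $Lw=0$ in $\Omega$, and vanishes on $\partial\Omega$; with $c\le 0$ in $\Omega$, $c<0$ on $\partial\Omega$, and $L$ uniformly elliptic on each interior subdomain, the standard maximum principle forces $w\equiv 0$.

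For the smooth-data existence, I would first mollify $a_{ij}, b_i, c, f$ and smooth $\partial\Omega$ and $\rho$ to obtain a family satisfying the hypotheses of Theorem \ref{thrm-ch3-regularity-higher} with uniform constants $\lambda, c_0, c_\alpha$. On this family I would run the continuity method along the path $L_t=(1-t)L_*+tL$ connecting $L$ to a simpler model operator $L_*$, for instance $L_*=\rho^2\Delta-\mu$ with $\mu>0$ sufficiently large that the sign condition $P_t(\alpha)<0$ is preserved for all $t\in[0,1]$. Theorem \ref{thrm-ch3-regularity-higher} supplies the uniform a priori bound needed for closedness, while openness is standard linear theory. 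Existence at the base point $L_*$ I would produce either by a Perron-type construction using powers of $\rho$ as barriers, or by vanishing viscosity: solve the uniformly elliptic problem for $(\rho^2+\delta)\Delta-\mu$ by classical Schauder theory and pass to the limit $\delta\downarrow 0$, the boundary behavior being controlled by \eqref{eq-ch3-estimate-solution-Holder-higher}.

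With smooth-data existence in hand, I would pass to the limit to recover the general case. Let $u^\epsilon$ denote the smooth solution of the $\epsilon$-approximation. The estimate \eqref{eq-ch3-estimate-solution-Holder-higher} together with an $L^\infty$ bound from the maximum principle gives uniform control of $u^\epsilon$ in $C^\alpha(\bar\Omega)$ and of $\rho\nabla u^\epsilon, \rho^2\nabla^2 u^\epsilon$ in $C^\alpha(\bar\Omega)$. Arzel\`a--Ascoli then extracts a subsequence converging in $C^{\alpha'}(\bar\Omega)$ for every $\alpha'<\alpha$, while interior Schauder estimates upgrade the convergence to $C^{2,\alpha'}_{\mathrm{loc}}(\Omega)$. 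The limit solves the original Dirichlet problem and inherits the claimed boundary regularity together with the vanishing of $\rho\nabla u$ and $\rho^2\nabla^2 u$ on $\partial\Omega$ from the uniform estimates.

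The hardest part will be setting up the smooth-data existence correctly: $L$ is not uniformly elliptic up to the boundary, so classical global Schauder existence does not apply directly, and one must either engineer a vanishing-viscosity perturbation that preserves the indicial structure of $L$ well enough to invoke Theorem \ref{thrm-ch3-regularity-higher} uniformly in $\delta$, or identify a tractable model operator at which to anchor the continuity method. Once this is resolved, the passage from smooth to $C^\alpha$ data is routine thanks to \eqref{eq-ch3-estimate-solution-Holder-higher}.
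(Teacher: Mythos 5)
Your strategy is correct in outline but more circuitous than the paper's argument, and it has one real gap. The paper dispenses with mollification of the data and with the continuity method: it applies vanishing viscosity directly to the full operator, setting $L_\delta = L+\delta\Delta$. For each $\delta>0$ this is uniformly elliptic with the same $C^\alpha(\bar\Omega)$ coefficients and the same $C^{1,\alpha}$ boundary, so classical Schauder theory immediately gives a unique $u_\delta \in C(\bar\Omega)\cap C^{2,\alpha}(\Omega)$ with $u_\delta = f/c$ on $\partial\Omega$. The work then consists of three $\delta$-uniform estimates: an $L^\infty$ bound from the maximum principle, a boundary decay estimate via homogeneous supersolutions adapted to $L_\delta$, and a weighted $C^{2,\alpha}$ estimate of Schauder type; Arzel\`a--Ascoli and interior Schauder finish the proof. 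Your two extra layers (mollifying $a_{ij},b_i,c,f,\rho,\partial\Omega$, and the continuity method from $L_*=\rho^2\Delta-\mu$ to $L$) are unnecessary once one realizes the viscous regularization can be applied to $L$ itself with $C^\alpha$ data, and the continuity method would require setting up a Fredholm framework for degenerate operators that the paper deliberately avoids.

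The genuine gap is in your claim that, in the vanishing-viscosity step at the base point, ``the boundary behavior being controlled by \eqref{eq-ch3-estimate-solution-Holder-higher}.'' That estimate is proved for operators of the form $\rho^2 a_{ij}\partial_{ij}+\rho b_i\partial_i+c$, and neither $(\rho^2+\delta)\Delta-\mu$ nor $L+\delta\Delta$ has that structure: the added $\delta\Delta$ is not of order $\rho^2$ near $\partial\Omega$. You cannot simply cite Theorem \ref{thrm-ch3-regularity-higher}; you must re-derive the decay and weighted H\"older estimates for the perturbed family with constants uniform in $\delta$. Concretely, one has to check that the homogeneous supersolutions of Lemma \ref{lemma-ch3-Linear-ComparisonFunction-t-factor} (functions like $t^\sigma(\varepsilon|x'|^2+t^2)^{(\mu-\sigma)/2}+Kt^\mu$) remain supersolutions for $L_\delta$ with a margin independent of $\delta$; this is precisely Steps 2 and 3 of the paper's proof, and it does require verification rather than citation. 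Your outer $\epsilon\to 0$ limit (smooth data to $C^\alpha$ data) is fine since there the operators do have the right structure and Theorem \ref{thrm-ch3-regularity-higher} applies directly, but this whole layer becomes moot once the inner step is carried out correctly for $L$ itself.
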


A new component of Theorem \ref{thrm-ch2-Existence-Dirichlet}
is the estimate of the $L^\infty$-norm of $u$ under the additional assumption 
$c\le 0$ in $\Omega$.
In view of Theorem \ref{thrm-ch3-regularity-higher}, for $k\ge 0$ and $\alpha\in (0,1)$, we set 
\begin{align}\label{eq-ch3-Holder-space}
C^{k, \alpha}_2(\bar\Omega)=\{u\in C^{k,\alpha}(\bar\Omega)\cap C^{k+2,\alpha}(\Omega): 
\rho^i\nabla^{k+i} u\in C^\alpha(\bar\Omega)\text{ for }i=1, 2\},
\end{align}
and equip it with the norm 
$$|u|_{{C}^{k, \alpha}_2(\bar\Omega)}=|u|_{C^{k,\alpha}(\bar\Omega)}
+\sum_{i=1}^2 |\rho^i\nabla^{k+i}u|_{C^{\alpha}(\bar\Omega)}.$$
Then, 
$({C}^{k, \alpha}_2(\bar\Omega), |\cdot|_{{C}^{k, \alpha}_2(\bar\Omega)})$
is a Banach space. 
Theorem \ref{thrm-ch2-Existence-Dirichlet} and Theorem \ref{thrm-ch3-regularity-higher} 
assert that $L: {C}^{k, \alpha}_2(\bar\Omega)\to C^{k,\alpha}(\bar\Omega)$ is an isomorphism, 
if $c\le 0$ in $\Omega$, and $c<0$ and $P({k+\alpha})<0$ on $\partial\Omega$. 
We point out that the target space in the isomorphism above is the usual H\"older space. 
Moreover, it is a general fact that  $\rho^i\nabla^{k+i}u=0$ 
on $\partial\Omega$, $i=1, 2$,  for $u\in C^{k, \alpha}_2(\bar\Omega)$.

We next describe our approach. 
The essential step in proving Theorem \ref{thrm-ch3-regularity-higher}
is to establish decay estimates of solutions near the boundary.
With the help of a calculus lemma, such decay estimates improve 
interior estimates of the H\"older semi-norms to 
global estimates of the H\"older semi-norms. 
Central to this approach is to construct appropriate homogenous supersolutions. 
With these supersolutions, we prove separate decays of tangential derivatives and 
normal derivatives. 
The study of the normal derivatives is much more involved. To discuss derivatives along the normal direction, 
we first establish a higher-order decay estimate of solutions  
to conclude the existence of normal derivatives on the boundary and then establish a decay estimate of 
normal derivatives to get the desired regularity of derivatives along the normal direction. 

For uniformly elliptic equations, 
it suffices to discuss the regularity of solutions along the tangential directions, 
since derivatives along the normal direction can be expressed 
in terms of other derivatives by the equation. 
However, this is not true anymore for equations that are degenerate along the boundary. 
In fact, regularity along the normal direction is a delicate issue for 
uniformly degenerate elliptic equations. 
In this paper, we use elliptic PDE methods for both tangential derivatives and normal derivatives. 
This is different from arguments based on ODEs for normal derivatives as in \cite{Lin1989Invent}. 
One advantage of the pure PDE approach  
is that the positive root of the characteristic polynomial is not assumed to be constant, 
although it is 
for many geometric problems.  

This paper is organized as follows. 
In Section \ref{sec-decay-estimates-linear}, 
we reformulate the boundary value problem 
\eqref{eq-ch2-basic-equation} and \eqref{eq-ch2-Dirichlet} 
near a flat portion of the boundary, 
and construct a class of homogeneous 
supersolutions and derive decay estimates of solutions. 
In Section \ref{sec-OptimalRegularity-linear-general-case}, we study the H\"older 
regularity of solutions up to the boundary. 
In Section \ref{sec-OptimalRegularity-linear-tangential-regularity}, 
we study the regularity of solutions 
along tangential directions. 
We prove that 
solutions are as regular as allowed 
by the coefficients and nonhomogeneous terms along the tangential directions.
In Section \ref{sec-OptimalRegularity-normal-regularity}, 
we discuss the regularity along the normal direction. 
We prove that 
solutions are regular along the normal direction up to a certain order.
In Section \ref{sec-existence-linear-existence}, we outline a proof 
of Theorem \ref{thrm-ch2-Existence-Dirichlet}
by regularizing the uniformly degenerate elliptic equations.

\section{Decay Estimates}\label{sec-decay-estimates-linear} 

In Sections \ref{sec-decay-estimates-linear}-\ref{sec-OptimalRegularity-normal-regularity}, 
we prove a local version of 
Theorem \ref{thrm-ch3-regularity-higher}. 
By an appropriate change of coordinates, we assume 
that the domain $\Omega$ has a portion of flat boundary and 
that the defining function is given by the distance function to the boundary. 
In these sections, summations over Latin letters are from 1 to $n$ 
and those over Greek letters are from 1 to $n-1$. 
However, an unrepeated $\alpha\in (0,1)$ is reserved for 
the H\"older index in the study of regularity for elliptic equations.

Denote by $x=(x',t)$ points in $\mathbb{R}^{n}$ and also write $x_{n}=t$. 
Set, for any $x_0'\in\mathbb R^{n-1}$ and any $r>0$, 
\begin{align*}
B'_{r}(x_0')=\{x'\in\mathbb{R}^{n-1}:|x'-x_0'|<r\},
\end{align*}
and 
\begin{align*}
&G_{r}(x_0')=\{(x',t)\in\mathbb{R}^{n}:|x'-x_0'|<r,\ 0<t<r\},\\
&\Sigma_{r}(x_0')=\{(x',0)\in\mathbb{R}^{n}:|x'-x_0'|<r\}. 
\end{align*}
We also set $B'_{r}=B'_{r}(0)$, $G_{r}=G_{r}(0)$, and $\Sigma_{r}=\Sigma_{r}(0)$.

Consider the operator
\begin{align}\label{eq-ch3-LinearOperator-t}
L=t^2a_{ij}\partial_{ij}+tb_i\partial_i+c\quad\text{in }G_1,\end{align}
with $a_{ij}, b_i, c\in C(\bar G_1)$ 
satisfying $a_{ij}=a_{ji}$ and, 
for any $x\in G_1$ and $\xi\in\mathbb R^n$, 
\begin{align}\label{eq-ch3-ellipticity-t}
\lambda|\xi|^2\le a_{ij}(x)\xi_i\xi_j\le\Lambda|\xi|^2,\end{align}
for some positive constants $\lambda$ and $\Lambda$. 
Similarly as in \eqref{eq-ch2-definition-P-mu}, define $Q(\mu)$ by 
$$Q(\mu)=\mu(\mu-1)a_{nn}+\mu b_n+c\quad\text{in }G_1.$$
We point out that $Q(\mu)$ here is defined in the entire domain $G_1$, instead of $P(\mu)$ only on the boundary. 
Note that $Q(0)=c$. 
We always assume
$c\le -c_0$ in $G_1,$
for some positive constant $c_0$.

We consider 
\begin{align}\label{eq-ch3-Equ} Lu= f\quad\text{in }G_1,
\end{align}
and 
\begin{align}\label{eq-ch3-Dirichlet}
u(\cdot, 0)=u_0\quad\text{on }B'_1,\end{align}
where we always assume 
\begin{align}\label{eq-ch3-boundary-value}u_0=\frac{f}{c}(\cdot, 0)
\quad\text{on }B'_1.\end{align}
We point out that the assumption $c<0$ on $\Sigma_1$ is essential in order to relate 
the boundary value of $u$ to values of the nonhomogeneous term $f$.

For some constant $\mu>0$, a simple computation yields 
$$Lt^\mu=Q(\mu) t^\mu.$$
If $Q(\mu)\le -c_\mu$ in $G_1$ 
for some positive constant $c_\mu$, 
then
$Lt^\mu\le -c_\mu t^\mu$  in $G_1$. 
In particular, $t^\mu$ is a supersolution. 
Next, by modifying $t^\mu$, we construct a class of homogeneous 
supersolutions,  
which play an essential role in our discussion and enable us to derive decay estimates of solutions.

\begin{lemma}\label{lemma-ch3-Linear-ComparisonFunction-t-factor} 
Let $\sigma$ and $\mu$ be constants with $0\le \sigma<\mu$. 
Assume $a_{ij}, b_i, c\in C(\bar G_1)$, 
with \eqref{eq-ch3-ellipticity-t},  and
$Q(\sigma)\le -c_\sigma$ and $Q(\mu)\le -c_\mu$ in $G_1$, for some 
positive constants $c_\sigma$ and $c_\mu$.
Then,  for some nonnegative constants $\varepsilon$ and $K$, depending only on $\mu$, $c_\mu$, and 
the $L^\infty$-norms 
of $a_{ij}$ and $b_i$ in $G_1$, 
\begin{align}\label{eq-ch3-supersolutions-sigma-factor}L\big[t^\sigma(\varepsilon |x'|^2+t^2)^{\frac{\mu-\sigma}2}
+K t^\mu\big]
\le -\frac12c_\sigma t^\sigma(\varepsilon |x'|^2+t^2)^{\frac{\mu-\sigma}2}\quad\text{in }G_1.\end{align}
\end{lemma}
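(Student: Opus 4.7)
My plan is to compute $L\phi$ explicitly for the ansatz $\phi:=t^{\sigma}r^{\mu-\sigma}$ with $r:=(\varepsilon|x'|^{2}+t^{2})^{1/2}$, separate the radial contribution (from $\partial_{t},\partial_{tt},c$) from the tangential and mixed contribution (which carries all the explicit $\varepsilon$), and then patch the two regions $\{t^{2}/r^{2}\le\tau_{0}\}$ and $\{t^{2}/r^{2}\ge\tau_{0}\}$ using the $Kt^{\mu}$ term in the latter. Setting $\tau:=t^{2}/r^{2}\in(0,1]$, direct differentiation gives
\[
L\phi \;=\; h(\tau;x)\,\phi \;+\; E(x,\varepsilon),
\]
where
\[
h(\tau;x)\;=\;Q(\sigma)\;+\;(\mu-\sigma)\bigl[(2\sigma+1)a_{nn}+b_{n}\bigr]\tau\;+\;a_{nn}(\mu-\sigma)(\mu-\sigma-2)\,\tau^{2}
\]
collects the contributions from $\partial_{tt}$, $\partial_{t}$, and $c$, while $E$ collects the tangential and mixed-derivative terms. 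Every summand in $E$ carries an explicit factor of $\varepsilon x_{\alpha}$ or $\varepsilon$; using the pointwise bound $\sqrt{\varepsilon}|x'|\le r$ together with $t\le r$, one obtains $|E|\le C_{0}\sqrt{\varepsilon}\,\phi$ for some $C_{0}$ depending only on $n,\mu,\sigma,\Lambda$ and the $L^{\infty}$-norm of $b_{i}$ in $G_{1}$.

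The algebraic crux is that $h$ interpolates the two relevant indicial values,
\[
h(0;x)=Q(\sigma)(x)\le -c_{\sigma},\qquad h(1;x)=Q(\mu)(x)\le -c_{\mu},
\]
the second equality coming from the identity $\sigma(\sigma-1)+(\mu-\sigma)(2\sigma+1)+(\mu-\sigma)(\mu-\sigma-2)=\mu(\mu-1)$. Since $|\partial_{\tau}h|$ is bounded in $x\in G_{1}$ in terms of the data, I choose $\tau_{0}\in(0,1)$, depending only on $c_{\sigma}$ and the data, so that $h(\tau;x)\le -\tfrac{3}{4}c_{\sigma}$ for all $\tau\in[0,\tau_{0}]$ and all $x\in G_{1}$.

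Now fix $\varepsilon\in(0,1]$ with $C_{0}\sqrt{\varepsilon}\le \tfrac{1}{4}c_{\sigma}$. On $\{\tau\le\tau_{0}\}$ this forces $L\phi\le -\tfrac{1}{2}c_{\sigma}\phi$, and since $L(Kt^{\mu})=KQ(\mu)t^{\mu}\le 0$ for every $K\ge 0$, the claimed inequality holds at once. On $\{\tau\ge\tau_{0}\}$ one has $\phi\le\tau_{0}^{-(\mu-\sigma)/2}t^{\mu}$ and the crude a priori bound $|L\phi|\le Mt^{\mu}$ for an explicit $M$; adding $L(Kt^{\mu})\le -Kc_{\mu}t^{\mu}$ and taking $K$ so large that $Kc_{\mu}-M\ge \tfrac{1}{2}c_{\sigma}\tau_{0}^{-(\mu-\sigma)/2}$ forces $L[\phi+Kt^{\mu}]\le -\tfrac{1}{2}c_{\sigma}\phi$ on this region too.

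The substantive step is the first one: deriving the explicit form of $h$, verifying the identity $h(1;x)=Q(\mu)(x)$, and extracting the $\sqrt{\varepsilon}$ (rather than $\varepsilon$) bound on $E/\phi$, which crucially uses the sharper pointwise estimate $\sqrt{\varepsilon}|x'|\le r$ in the mixed-derivative terms rather than the weaker $\varepsilon|x'|^{2}\le r^{2}$. Once these are in hand, the order of constant-chasing is forced: first $\tau_{0}$ from the modulus of continuity of $h$, then $\varepsilon$ from $C_{0}$, then $K$ from $M$ and $\tau_{0}$.
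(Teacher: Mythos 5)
Your proposal is correct and follows essentially the same route as the paper: decompose $L$ acting on $t^\sigma(\varepsilon|x'|^2+t^2)^{(\mu-\sigma)/2}$ into the pure-$t$/zeroth-order part (a polynomial in $\tau=t^2/r^2$ whose value at $\tau=0$ is $Q(\sigma)$) and a tangential/mixed remainder bounded by $C\sqrt{\varepsilon}\,\phi$; then patch near the axis using the smallness of $\tau$, and away from the axis using the $Kt^\mu$ term with $L(Kt^\mu)\le -Kc_\mu t^\mu$. Your parametrization by $\tau$ and the explicit interpolation identity $h(1;x)=Q(\mu)$ are a slightly cleaner way of organizing the same computation the paper performs via the quantities $I_1$ and $I_2$, and the chain of choices $\tau_0\to\varepsilon\to K$ matches the paper's $\delta\to\varepsilon\to K$.
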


\begin{proof} 
Set, for some $\varepsilon\in (0,1)$
to be determined,   
\begin{equation}\label{eq-ch3-SuperDegree-mu}
\widehat \psi(x)=t^\sigma(\varepsilon|x'|^2+t^2)^{\frac{\mu-\sigma}2}.\end{equation}
Write $L=L_1+L_2,$
with
\begin{align*}L_1&=t^2a_{nn}\partial_{tt}+tb_n\partial_t+c,\\
L_2&=2t^2a_{\alpha n}\partial_{\alpha t}
+t^2a_{\alpha\beta}\partial_{\alpha\beta}+tb_\alpha\partial_\alpha.\end{align*}
A straightforward calculation yields
$$L_i\widehat \psi=t^{\sigma}(\varepsilon|x'|^2+t^2)^{\frac{\mu-\sigma}2-2}I_i\quad\text{for }i=1,2,$$
where
\begin{align}\label{eq-ch3-expression-I1}\begin{split}
I_1&=(\mu-\sigma)(\mu-\sigma-2)a_{nn}t^4+(\sigma(\sigma-1)a_{nn}+\sigma b_n+c)(\varepsilon|x'|^2+t^2)^2\\
&\qquad+(\mu-\sigma) ((2\sigma+1)a_{nn}+ b_n)t^2(\varepsilon|x'|^2+t^2),
\end{split}\end{align}
and 
\begin{align*}
I_2&=(\mu-\sigma)(\mu-\sigma-2)t^2
(2\varepsilon a_{\alpha n}x_\alpha t+\varepsilon^2a_{\alpha\beta}x_\alpha x_\beta)\\
&\qquad+(\mu-\sigma)(\varepsilon|x'|^2+t^2)(\varepsilon a_{\alpha\beta}\delta_{\alpha\beta} t^2
+\varepsilon(2\sigma a_{\alpha n}+b_\alpha) t x_\alpha).
\end{align*}
By $2\sqrt{\varepsilon}|x'|t\le \varepsilon|x'|^2+t^2$, we obtain 
$$I_2\le C_1\sqrt{\varepsilon}(\varepsilon|x'|^2+t^2)^2,$$
and hence
\begin{equation}\label{eq-ch3-Estimate-hat-w1}
L_2\widehat \psi\le C_1\sqrt{\varepsilon}t^\sigma(\varepsilon|x'|^2+t^2)^{\frac{\mu-\sigma}2},
\end{equation}
where $C_1$ is a positive constant depending only on $\mu$ and 
the $L^\infty$-norms of $a_{ij}$ and $b_i$ in $G_1$. 
By  \eqref{eq-ch3-expression-I1} and $Q(\sigma)\le -c_\sigma$, we have
\begin{align*}
I_1 &\le
-c_\sigma(\varepsilon|x'|^2+t^2)^2+(\mu-\sigma)(\mu-\sigma-2)a_{nn}t^4\\
&\qquad+(\mu-\sigma) ((2\sigma+1)a_{nn}+ b_n)t^2(\varepsilon|x'|^2+t^2).
\end{align*}
By the Cauchy inequality, we get 
\begin{align*}I_1\le 
-\frac78c_\sigma (\varepsilon|x'|^2+t^2)^2+C_2t^4,\end{align*} 
where $C_2$ is a positive constant depending only on $\mu$ 
and the $L^\infty$-norms of $a_{nn}$ and
$b_{n}$ in $G_1$. Hence, 
\begin{equation}\label{eq-ch3-Estimate-hat-w2}
L_1\widehat \psi\le t^{\sigma}(\varepsilon|x'|^2+t^2)^{\frac{\mu-\sigma}2}
\Big\{-\frac78c_\sigma +\frac{C_2t^4}{(\varepsilon|x'|^2+t^2)^2}\Big\}.\end{equation}
By adding \eqref{eq-ch3-Estimate-hat-w1} and \eqref{eq-ch3-Estimate-hat-w2}, we get 
$$L\widehat \psi\le t^\sigma(\varepsilon|x'|^2+t^2)^{\frac{\mu-\sigma}2}\Big\{-\frac78c_\sigma+C_1\sqrt{\varepsilon}
+\frac{C_2t^4}{(\varepsilon|x'|^2+t^2)^2}\Big\}.$$
By taking $\varepsilon$ small, we have
\begin{equation}\label{eq-ch3-Estimate-hat-w3}
L\widehat \psi\le t^\sigma(\varepsilon|x'|^2+t^2)^{\frac{\mu-\sigma}2}\Big\{-\frac34c_\sigma
+\frac{C_2t^4}{(\varepsilon|x'|^2+t^2)^2}\Big\}.\end{equation}
In the following, we fix such an $\varepsilon$. 
Next, we set
\begin{equation}\label{eq-ch3-SuperDegree-mu-modified}
\widetilde{\psi}=t^\mu.\end{equation}
By $Q(\mu)\le -c_\mu$,  we have 
\begin{equation}\label{eq-ch3-Estimate-tilde-w1}
L\widetilde \psi=Q(\mu) t^\mu\le -c_\mu t^\mu.\end{equation}

We now combine \eqref{eq-ch3-Estimate-hat-w3} and \eqref{eq-ch3-Estimate-tilde-w1}
and consider $L(\widehat \psi+K\widetilde \psi)$, for some nonnegative constant $K$. 
Take a constant $\delta>0$ to be determined. 
We first consider $t\le \delta  (\varepsilon|x'|^2+t^2)^{1/2}$. By \eqref{eq-ch3-Estimate-hat-w3}, we get 
$$L\widehat \psi\le t^\sigma(\varepsilon|x'|^2+t^2)^{\frac{\mu-\sigma}2}\Big\{-\frac34c_\sigma
+C_2\delta^4\Big\}
\le -\frac12c_\sigma t^\sigma(\varepsilon|x'|^2+t^2)^{\frac{\mu-\sigma}2},$$
by taking $\delta$ small. 
Note that \eqref{eq-ch3-Estimate-tilde-w1} implies
$L\widetilde \psi\le0.$
Therefore, for any $K\ge 0$ and $t\le \delta  (\varepsilon|x'|^2+t^2)^{1/2}$, 
\begin{equation}\label{eq-ch3-SuperDegree-mu1}
L(\widehat \psi+K\widetilde \psi)\le -\frac12c_\sigma t^\sigma(\varepsilon|x'|^2+t^2)^{\frac{\mu-\sigma}2}.\end{equation}
Next, we consider $t\ge\delta (\varepsilon|x'|^2+t^2)^{1/2}$. By \eqref{eq-ch3-Estimate-tilde-w1}, 
we have
$$
L\widetilde \psi\le -c_\mu\delta^{\mu-\sigma}t^\sigma (\varepsilon|x'|^2+t^2)^{\frac{\mu-\sigma}2}.
$$
By \eqref{eq-ch3-Estimate-hat-w3}, we get
$$
L\widehat \psi\le C_2 t^\sigma(\varepsilon|x'|^2+t^2)^{\frac{\mu-\sigma}2}.$$
Hence, 
$$L(\widehat \psi+K\widetilde \psi)\le t^\sigma(\varepsilon|x'|^2+t^2)^{\frac{\mu-\sigma}2}
\big(C_2-Kc_\mu\delta^{\mu-\sigma}\big).$$
By taking $K$ sufficiently large, 
we obtain \eqref{eq-ch3-SuperDegree-mu1} for $t\ge\delta (\varepsilon|x'|^2+t^2)^{1/2}$.
Therefore,  \eqref{eq-ch3-SuperDegree-mu1} holds in $G_1$. 
We have the desired result by noting that $\widehat \psi$ and $\widetilde \psi$ are given by 
\eqref{eq-ch3-SuperDegree-mu} and \eqref{eq-ch3-SuperDegree-mu-modified}, respectively. 
\end{proof}

With Lemma \ref{lemma-ch3-Linear-ComparisonFunction-t-factor}, we now 
derive a decay estimate near $\Sigma_1$. 

\begin{lemma}\label{lemma-ch3-Linear-Estimate-alpha}
For some constant 
$\alpha\in (0,1)$,  
assume $a_{ij}, b_i\in C(\bar G_1)$ and $c\in C^{\alpha}(\bar G_1)$, 
with \eqref{eq-ch3-ellipticity-t}, and $c\le -c_0$ 
and $Q({\alpha})\le -c_{\alpha}$  in $G_1$, for some positive constants 
$c_0$ and $c_{\alpha}$.
For some $f\in C^\alpha(\bar G_1)$, let $u\in L^\infty(G_1)\cap C^2(G_1)$ be 
a solution of \eqref{eq-ch3-Equ}.   
Then, for any $r\in (0,1)$,  any 
$x_0'\in B_{r}'$, and any $(x',t)\in G_1$, 
\be\label{eq-ch3-linear-estimate-alpha}
|u(x',t)-u_0(x'_0)|
\le C(|x'-x_0'|^2+t^2)^{\frac\alpha2}\big\{|u|_{L^\infty(G_1)}+|f|_{C^\alpha(\bar G_1)}\big\},\ee
where $u_0$ is given by \eqref{eq-ch3-boundary-value}
and $C$ is a positive constant depending only on $n$, $r$, $\lambda$, $c_0$, $c_{\alpha}$, 
the $L^\infty$-norms of $a_{ij}, b_i$ in $G_1$, 
and the $C^\alpha$-norm of $c$ in $\bar G_1$.
\end{lemma}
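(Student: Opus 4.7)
\emph{Proof proposal.} The plan is to apply Lemma \ref{lemma-ch3-Linear-ComparisonFunction-t-factor} with $\sigma=0$ and $\mu=\alpha$ (noting $Q(0)=c\le-c_0$, so $c_\sigma=c_0$) to construct a barrier centered at the base point $(x_0',0)\in\Sigma_1$, and compare $u-u_0(x_0')$ against it via the maximum principle for the uniformly degenerate operator $L$. Begin by setting $v(x)=u(x)-u_0(x_0')$, so that, using $Lu=f$, $L1=c$, together with the identity $f(x_0',0)=c(x_0',0)u_0(x_0')$ from \eqref{eq-ch3-boundary-value},
\begin{align*}
Lv(x)=\bigl(f(x)-f(x_0',0)\bigr)-u_0(x_0')\bigl(c(x)-c(x_0',0)\bigr).
\end{align*}
Since $|u_0(x_0')|\le|f|_{L^\infty}/c_0$ and $f,c\in C^{\alpha}(\bar G_1)$, this yields the pointwise bound
\begin{align*}
|Lv(x)|\le C_{*}M\bigl(|x'-x_0'|^2+t^2\bigr)^{\alpha/2},\qquad M:=|u|_{L^\infty(G_1)}+|f|_{C^\alpha(\bar G_1)},
\end{align*}
with $C_{*}$ depending only on $c_0$ and $|c|_{C^\alpha}$.

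Next, apply Lemma \ref{lemma-ch3-Linear-ComparisonFunction-t-factor} (its proof is translation-invariant in $x'$) to produce constants $\varepsilon\in(0,1)$ and $K\ge 0$, depending only on $\alpha$, $c_0$, $c_\alpha$, and the $L^\infty$-norms of $a_{ij}$ and $b_i$, such that
\begin{align*}
\Psi(x):=\bigl(\varepsilon|x'-x_0'|^2+t^2\bigr)^{\alpha/2}+Kt^{\alpha}
\end{align*}
satisfies $L\Psi\le-\tfrac12 c_0\bigl(\varepsilon|x'-x_0'|^2+t^2\bigr)^{\alpha/2}$ in $G_1$. I then claim that, for $A=C(r)M$ with $C(r)$ chosen large enough, the inequalities $A\Psi\pm v\ge 0$ hold on $\partial G_1\setminus\Sigma_1$ and $L(A\Psi\pm v)\le 0$ in $G_1$. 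The boundary check uses that on $\{|x'|=1\}\cup\{t=1\}$ one has $\Psi\ge\min\bigl(1,\varepsilon^{\alpha/2}(1-r)^{\alpha}\bigr)$, while $|v|\le 2M$, since $x_0'\in B_r'$ with $r<1$. The interior inequality combines the supersolution estimate for $L\Psi$ with the bound on $|Lv|$, via the elementary comparison $(|x'-x_0'|^2+t^2)^{\alpha/2}\le\varepsilon^{-\alpha/2}(\varepsilon|x'-x_0'|^2+t^2)^{\alpha/2}$ (valid for $\varepsilon\le 1$), so that $-\tfrac12 Ac_0+C_{*}M\varepsilon^{-\alpha/2}\le 0$ is arranged by taking $A$ large.

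The maximum principle for bounded solutions of the uniformly degenerate equation then yields $|v|\le A\Psi$ in $G_1$: any interior extremum of $\mp(A\Psi\pm v)$ forces a sign contradiction through the term $cw$ with $c\le-c_0<0$, while the degenerate face $\Sigma_1$ requires no additional boundary data because the degeneracy of $L$ and the strict negativity of $c$ there are enough to absorb unknown boundary values. Since $\Psi(x)\le(1+K)(|x'-x_0'|^2+t^2)^{\alpha/2}$ for $\varepsilon\le 1$, this gives \eqref{eq-ch3-linear-estimate-alpha}. The main subtlety lies in this last step: $u$ is assumed only bounded and $C^2$ in $G_1$, with no a priori continuity up to $\Sigma_1$, so the applicable maximum principle must genuinely exploit the strict negativity of $c$ on the degenerate face to handle the potentially unknown boundary behaviour of $v$ on $\Sigma_1$.
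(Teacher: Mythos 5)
Your barrier construction and the reduction to $L v = h$ with $|h|\le C_*M(|x'-x_0'|^2+t^2)^{\alpha/2}$ are correct and match the paper's. You also correctly handle the special case where $u$ is continuous up to $\Sigma_1$. But the general case contains a genuine gap that you acknowledge but do not close: you invoke "the maximum principle for bounded solutions of the uniformly degenerate equation" to compare $v$ against $A\Psi$ in $G_1$ without supplying boundary data on the degenerate face $\Sigma_1$, and you justify it only by asserting that "the degeneracy of $L$ and the strict negativity of $c$ there are enough to absorb unknown boundary values." That is exactly the statement that requires proof. The classical maximum principle applied to $A\Psi\pm v$ gives a contradiction at an \emph{attained} interior minimum via $c<0$, but when $u$ is only $L^\infty$ the infimum may be approached along $t\to 0$ and never attained, and nothing in your argument rules this out. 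In fact, this is precisely the reason the lemma is nontrivial: continuity of $u$ up to $\Sigma_1$ is a \emph{conclusion} of the estimate, not a hypothesis, so one cannot appeal to a maximum principle that quietly presupposes control of $v$ at $\Sigma_1$.

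The paper closes this gap by an explicit conjugation: it sets $v_\kappa=t^\kappa(u-u_0(x_0'))$ for $\kappa\in(0,1)$, which \emph{is} continuous up to $\Sigma_1$ with $v_\kappa=0$ there because $u$ is bounded, writes the conjugated equation $L_{-\kappa}v_\kappa=t^\kappa h$ with $Q^{(-\kappa)}(\mu)=Q(\mu-\kappa)$, applies Lemma~\ref{lemma-ch3-Linear-ComparisonFunction-t-factor} with $\sigma=\kappa$, $\mu=\kappa+\alpha$ to the barrier $t^\kappa(\varepsilon|x'-x_0'|^2+t^2)^{\alpha/2}+Kt^{\kappa+\alpha}$, runs the standard maximum principle (now legitimate, since boundary values are known on all of $\partial G_1$), and then divides by $t^\kappa$, noting the constants are independent of $\kappa$. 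An equivalent fix in your framework would be to perturb the comparison function by $\eta t^{-\kappa}$ with $\eta\downarrow 0$ and $\kappa>0$ small so that $Q(-\kappa)<0$; this blows up at $\Sigma_1$ and forces the infimum to be attained away from the degenerate face, after which your Case-1-style argument applies. Either way, the assertion you made needs an argument of this type; as written your proof assumes a generalized maximum principle (essentially of Cheng--Yau type, which the paper mentions only as an alternative route) without proving or even citing it.
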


\begin{proof} Set 
$$F=|u|_{L^\infty(G_1)}+|f|_{C^\alpha(\bar G_1)}.$$
Fix an $r\in (0,1)$ and take any $x_0'\in B'_r$. A simple subtraction yields 
\begin{equation}\label{eq-ch3-Equ-alpha-general} L(u-u_0(x'_0))= h\quad\text{in }G_1,\end{equation}
where
$$h=f-u_0(x'_0) c=f-f(x_0',0)-u_0(x'_0)(c-c(x'_0,0)).$$
Here, we used $f(x'_0,0)= c(x'_0,0)u_0(x'_0)$
by \eqref{eq-ch3-boundary-value}. Hence, 
\begin{align}\label{eq-boundary-estimate-h}
\pm h\ge -C|f|_{C^\alpha(\bar G_1)}(|x'-x'_0|^2+t^2)^{\frac\alpha2}\quad\text{in }G_1.\end{align}

We first consider a special case that $u\in C(\bar G_1)$
and $u(\cdot, 0)=u_0$ on $B_1'$. 
Then,
$$\pm (u(x',0)-u_0(x'_0))=\pm\Big(\frac{f}{c}(x',0)-\frac{f}{c}(x'_0,0)\Big)
\le C|f|_{C^\alpha(\bar G_1)}|x'-x'_0|^\alpha\quad\text{on }\Sigma_{1},$$
and
$$\pm (u(x',t)-u_0(x_0'))\le 2|u|_{L^\infty(B_1)}\quad\text{in } G_{1}.$$
Set
\begin{equation}\label{eq-ch3-ComparisonFunction}
\psi(x',t)=(\varepsilon |x'-x'_0|^2+t^2)^{\frac\alpha2}
+K t^{\alpha}.
\end{equation}
By Lemma \ref{lemma-ch3-Linear-ComparisonFunction-t-factor}
with $\sigma=0$ and $\mu=\alpha$, we have, for some positive constants 
$\varepsilon$ and $K$, 
$$L\psi\le -\frac12c_0(\varepsilon |x'-x'_0|^2+t^2)^{\frac\alpha2}.$$
Moreover, 
$$\psi(x',0)= \varepsilon^{\frac\alpha2}|x'-x'_0|^\alpha\quad\text{on }\Sigma_{1},$$
and
$$\psi \ge \varepsilon^{\frac\alpha2}(1-r)^\alpha\quad\text{on }\partial G_1\setminus \Sigma_{1}.$$
For some constant $A$ sufficiently large, we obtain 
$$\aligned \pm L(u-u_0(x'_0))&\ge L(AF\psi)\quad\text{in }G_1,\\ 
\pm(u-u_0(x'_0))&\le AF\psi\quad\text{on }\partial G_1.\endaligned$$
The maximum principle implies 
$$\pm (u-u_0(x'_0))\le AF\psi\quad\text{in }G_1,$$
and hence, for any $(x',t)\in G_{1}$, 
$$|u(x', t)-u_0(x'_0)|\le A(1+K)F(|x'-x'_0|^2+t^2)^{\frac\alpha2}.$$ 
This is the desired estimate.

We now consider the general case that $u\in L^\infty(G_1)$. 
Take any $\kappa\in (0,1)$ and set 
$$v=t^\kappa(u-u_0(x'_0))=\frac{u-u_0(x'_0)}{t^{-\kappa}}.$$
Then, $v$ is continuous up to $\Sigma_1$ with $v=0$ on $\Sigma_1$, and 
$$\pm v\le t^\kappa \big\{|u|_{L^\infty(G_1)}+|u_0|_{L^\infty(B_1')}\big\}
\le C t^\kappa \big\{|u|_{L^\infty(G_1)}+|f|_{L^\infty(G_1)}\big\}.$$
By \eqref{eq-ch3-Equ-alpha-general} and a straightforward computation, we have 
$$L_{-\kappa}v=t^\kappa h,$$ 
where 
\begin{equation*}%\label{eq-ch3-Operator-t-kappa} 
L_{-\kappa}=t^2a_{ij}\partial_{ij}+t(b_i-2\kappa a_{in})\partial_i+Q(-\kappa).
\end{equation*}
Note that $L_{-\kappa}$ has a similar structure as $L$ and has an associated 
$Q^{(-\kappa)}(\mu)$ given by 
$$Q^{(-\kappa)}(\mu)=\mu(\mu-1)a_{nn}+\mu (b_n-2\kappa a_{nn})+Q(-\kappa).$$
A simple computation yields 
$Q^{(-\kappa)}(\mu)=Q({\mu-\kappa})$ in $Q_1.$
In particular, we have $Q^{(-\kappa)}({\kappa})$ $=Q(0)=c\le -c_0$ and 
$Q^{(-\kappa)}({\kappa+\alpha})=Q(\alpha)\le -c_\alpha$ in $Q_1.$ 
Instead of \eqref{eq-ch3-ComparisonFunction}, set 
\begin{equation*}\psi(x',t)=t^{\kappa}(\varepsilon |x'-x'_0|^2+t^2)^{\frac\alpha2}
+K t^{\alpha+\kappa}.
\end{equation*}
By applying Lemma \ref{lemma-ch3-Linear-ComparisonFunction-t-factor} to $L_{-\kappa}$ with 
$\sigma=\kappa$ and $\mu=\kappa+\alpha$, 
we have
$$L_{-\kappa}\psi\le -\frac12c_0 t^{\kappa}(\varepsilon |x'-x'_0|^2+t^2)^{\frac{\alpha}2},$$
for some positive constants 
$\varepsilon$ and $K$.
By \eqref{eq-boundary-estimate-h}, we get
$$\pm t^\kappa h\ge -Ct^\kappa |f|_{C^\alpha(\bar G_1)}(|x'-x'_0|^2+t^2)^{\frac\alpha2}\quad\text{in }G_1.$$
By proceeding as in the proof in the special case, we have, 
for any $(x',t)\in G_{1}$, 
$$|v(x', t)|\le A(1+K)t^\kappa F(|x'-x'_0|^2+t^2)^{\frac\alpha2},$$ 
and hence
$$|u(x', t)-u_0(x'_0)|\le A(1+K)F(|x'-x'_0|^2+t^2)^{\frac\alpha2}.$$ 
This is the desired estimate. 
We note that the constants $\varepsilon$ and $K$ in the definition of $\psi$ and $A$ above 
are in fact independent of $\kappa\in (0,1)$. 
\end{proof}

We now make several remarks. 
First, we only assume that the solution $u$ is bounded in $G_1$. The estimate \eqref{eq-ch3-linear-estimate-alpha}
implies in particular that $u$ is continuous up to $\Sigma_1$ with $u=u_0$ on $\Sigma_1$. 
This is an important feature of Lemma \ref{lemma-ch3-Linear-Estimate-alpha}. 
Second, for a fixed $x_0'\in B_{1/2}'$, the estimate \eqref{eq-ch3-linear-estimate-alpha}
actually establishes that $u$ is $C^\alpha$ at $(x_0',0)$. In fact, it holds under a weaker assumption 
that $f$ and $c$ are $C^\alpha$ at $(x_0',0)$. 
Third, we can prove \eqref{eq-ch3-linear-estimate-alpha} alternatively by a maximum principle due to 
Cheng and Yau \cite{ChengYau1975}. See also \cite{GrahamLee1991}. 

\section{The H\"older Regularity}\label{sec-OptimalRegularity-linear-general-case}

In this section, we study the H\"older 
regularity of solutions up to the boundary. 
We first state a simple lemma. 

\begin{lemma}\label{lemma-ch3-GlobalHolder} Let $A>0$ and $\alpha\in (0,1)$ be constants, 
and $w\in C^\alpha(G_1)$ and $w_0\in C^\alpha(B_1')$ be functions. Suppose, 
for any $(x',t)\in G_{1}$ with $B_{t/2}(x',t)$ $\subset G_1$, 
\begin{equation}\label{eq-ch3-GlobalHolder1}|w(x',t)-w_0(x')|\le At^\alpha,\end{equation} 
and 
\begin{equation}\label{eq-ch3-GlobalHolder2}[w]_{C^\alpha(B_{t/2}(x',t))}\le A.\end{equation}
Then, $w\in C^\alpha(\bar G_{r})$, for any $r\in (0,1)$, and 
$$|w|_{C^\alpha(\bar G_{1/2})}\le 5A+|w_0|_{C^\alpha(B_1')}.$$
\end{lemma}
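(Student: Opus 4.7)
The plan is to bound the two pieces of the $C^\alpha(\bar G_{1/2})$-norm separately: the sup-norm part, which falls out directly from hypothesis \eqref{eq-ch3-GlobalHolder1}, and the Hölder seminorm, where the real work lies. For the sup-norm, if $(x',t) \in G_{1/2}$ then $B_{t/2}(x',t) \subset G_1$, so \eqref{eq-ch3-GlobalHolder1} gives $|w(x',t)| \le A t^\alpha + |w_0(x')| \le A + |w_0|_{L^\infty(B_1')}$; letting $t\to 0$ in \eqref{eq-ch3-GlobalHolder1} also shows that $w$ extends continuously to $\Sigma_1$ with boundary values $w_0$, so we may regard $w$ as defined on $\bar G_{1/2}$.

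For the seminorm I would pick two points $P_i = (x_i', t_i) \in \bar G_{1/2}$, $i=1,2$, ordered so that $t_1 \le t_2$, set $d = |P_1 - P_2|$, and split on the size of $d$ relative to $t_2$. In the \emph{near} regime $d \le t_2/2$, the point $P_1$ lies in $\bar B_{t_2/2}(P_2)$, and an elementary check gives $B_{t_2/2}(P_2) \subset G_1$ (the $t$-coordinates of the ball stay in $(t_2/2,\, 3t_2/2) \subset (0,3/4)$ and the $x'$-coordinates stay in $B'_{3/4}$); hence hypothesis \eqref{eq-ch3-GlobalHolder2} applies and yields $|w(P_1)-w(P_2)| \le A d^\alpha$.

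In the \emph{far} regime $d > t_2/2$, so $t_1 \le t_2 < 2d$, I would route through the boundary. Applying \eqref{eq-ch3-GlobalHolder1} at each $P_i$ gives $|w(P_i) - w_0(x_i')| \le A t_i^\alpha \le 2^\alpha A d^\alpha$ (this also covers the degenerate case $t_i=0$), and the $C^\alpha$-regularity of $w_0$ gives $|w_0(x_1') - w_0(x_2')| \le |w_0|_{C^\alpha(B_1')} |x_1' - x_2'|^\alpha \le |w_0|_{C^\alpha(B_1')} d^\alpha$. Adding the three contributions, $|w(P_1)-w(P_2)| \le (2\cdot 2^\alpha A + |w_0|_{C^\alpha(B_1')}) d^\alpha \le (4A + |w_0|_{C^\alpha(B_1')}) d^\alpha$. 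Combining both regimes gives $[w]_{C^\alpha(\bar G_{1/2})} \le 4A + |w_0|_{C^\alpha(B_1')}$, and adding this to the sup-norm bound $A + |w_0|_{L^\infty(B_1')}$ produces the advertised $5A + |w_0|_{C^\alpha(B_1')}$. The identical argument on $\bar G_r$ for any $r\in(0,1)$ gives $w\in C^\alpha(\bar G_r)$.

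I do not expect a genuine obstacle: this is a standard calculus lemma that converts interior Hölder control plus boundary decay into global Hölder control. The only step requiring care is the geometric verification in the near regime that $B_{t_2/2}(P_2) \subset G_1$, which is precisely why one has to shrink from $G_1$ to $G_{1/2}$ (or more generally to $G_r$ with a constant depending on $r$).
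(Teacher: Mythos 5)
The paper declares this lemma standard and omits the proof, pointing only to Propositions 4.12--4.13 of Caffarelli--Cabr\'e, so there is nothing in the text to compare your argument against; your near/far decomposition on $d$ versus $t_2$ is precisely the standard route. The geometric check that $B_{t_2/2}(P_2)\subset G_1$ for $P_2\in\bar G_{1/2}$ is correct, the boundary extension via \eqref{eq-ch3-GlobalHolder1} is handled properly, and the degenerate cases $t_i=0$ are noted.

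One small slip in the bookkeeping: in the far regime you wrote $|w_0(x_1')-w_0(x_2')|\le |w_0|_{C^\alpha(B_1')}|x_1'-x_2'|^\alpha$, using the full $C^\alpha$-norm of $w_0$ where only the seminorm $[w_0]_{C^\alpha(B_1')}$ is needed. As written, adding the seminorm bound $[w]_{C^\alpha(\bar G_{1/2})}\le 4A+|w_0|_{C^\alpha(B_1')}$ to the sup-norm bound $|w|_{L^\infty(\bar G_{1/2})}\le A+|w_0|_{L^\infty(B_1')}$ yields $5A+|w_0|_{C^\alpha(B_1')}+|w_0|_{L^\infty(B_1')}$, which overshoots the advertised constant. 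Replacing $|w_0|_{C^\alpha}$ by $[w_0]_{C^\alpha}$ in that one line gives $[w]_{C^\alpha(\bar G_{1/2})}\le 4A+[w_0]_{C^\alpha(B_1')}$, and then the sum is exactly $5A+|w_0|_{C^\alpha(B_1')}$ since $|w_0|_{C^\alpha}=|w_0|_{L^\infty}+[w_0]_{C^\alpha}$. With that correction the proof is complete.
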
 

The proof is standard and hence omitted. 
Compare with Propositions 4.12-4.13 \cite{Caffarelli-Cabre1995}.

We now prove a general H\"older regularity for uniformly degenerate elliptic equations 
that solutions with appropriate decays near the boundary are H\"older continuous up to the boundary. 
We introduce the scaled H\"older norm on balls for brevity. 
For some ball $B_r(x)\subset\mathbb R^n$ and some H\"older continuous function $w\in C^\alpha(\bar B_r(x))$, 
define 
\begin{equation}\label{eq-ch2-scaled-Holder-norm}
|w|^*_{C^\alpha(B_r(x))}=|w|_{L^\infty(B_r(x))}+r^\alpha[w]_{C^\alpha(B_r(x))}.\end{equation}

\begin{lemma}\label{lemma-ch3-Linear-Regularity-general}
For some constant 
$\alpha\in (0,1)$, assume $a_{ij}, b_i, c\in C^{\alpha}(\bar G_1)$, 
with \eqref{eq-ch3-ellipticity-t}, and $c\le -c_0$  in $G_1$, for some positive constant
$c_0$.
For some $f\in C^\alpha(\bar G_1)$, let $u\in C(\bar G_1)\cap C^2(G_1)$ be 
a solution of \eqref{eq-ch3-Equ} and \eqref{eq-ch3-Dirichlet}, satisfying, for any $(x',t)\in G_1$, 
\begin{equation}\label{eq-ch3-decay-assumption-general}
|u(x',t)-u_0(x')|\le A_0t^\alpha,
\end{equation}
for the function $u_0$ given by \eqref{eq-ch3-boundary-value} and some positive constant $A_0$. 
Then, for any $r\in (0,1)$, 
\begin{equation*}
u, tDu, t^2D^2u\in C^\alpha(\bar G_r),\end{equation*}
with $tDu=0$ and $t^2D^2u=0$ on $\Sigma_1$, and 
\begin{align*}
|u|_{C^\alpha(\bar G_{1/2})}+|tDu|_{C^\alpha(\bar G_{1/2})}
+|t^2D^2u|_{C^\alpha(\bar G_{1/2})}
\le C\big\{A_0+|u|_{L^\infty(G_1)}+|f|_{C^\alpha(\bar G_1)}\big\},
\end{align*}
where $C$ is a positive constant depending only on $n$, $\lambda$, 
$\alpha$,  $c_0$, 
and
the $C^\alpha$-norms of $a_{ij}, b_i, c$ in $\bar G_1$. 
\end{lemma}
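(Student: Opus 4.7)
The plan is to combine the pointwise decay hypothesis \eqref{eq-ch3-decay-assumption-general} with standard interior Schauder estimates on balls of radius proportional to the distance to $\Sigma_1$, and then to assemble the outcome via the calculus Lemma~\ref{lemma-ch3-GlobalHolder}. Away from $\Sigma_1$ the operator $L$ is uniformly elliptic with ellipticity constants of order $t^2$, so each ball $B_{t_0/2}(x_0',t_0)\subset G_1$, after rescaling to unit size, will become a standard Schauder problem whose inhomogeneous term inherits an extra factor of $t_0^\alpha$ from the boundary compatibility $f=cu_0$ on $\Sigma_1$ built into \eqref{eq-ch3-boundary-value}.

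Fixing $x_0=(x_0',t_0)\in G_{1/2}$, I would work with $v(y)=u(x_0+t_0y)-u_0(x_0')$ for $y\in B_{1/2}$. Noting that $L$ annihilates constants only up to the zeroth-order term, dividing $Lu=f$ by $t^2$ (where $t=t_0(1+y_n)\in[t_0/2,3t_0/2]$) and changing variables produces a uniformly elliptic equation for $v$ on $B_{1/2}$ with bounded $C^\alpha$-coefficients and inhomogeneous term $\tilde g(y)=(t_0/t)^2[f(x_0+t_0y)-c(x_0+t_0y)u_0(x_0')]$. The compatibility $f(x_0',0)=c(x_0',0)u_0(x_0')$, combined with $f,c\in C^\alpha$ and $c\le-c_0$, gives $u_0=f/c\in C^\alpha(\overline{B'_1})$ and, crucially, $|\tilde g|_{C^\alpha(B_{1/2})}\le Ct_0^\alpha|f|_{C^\alpha(\bar G_1)}$. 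The decay assumption \eqref{eq-ch3-decay-assumption-general} together with the Hölder regularity of $u_0$ likewise bounds $|v|_{L^\infty(B_{1/2})}\le Ct_0^\alpha(A_0+|f|_{C^\alpha(\bar G_1)})$, and interior Schauder on $B_{1/2}$ then yields $|v|_{C^{2,\alpha}(B_{1/4})}\le Ct_0^\alpha F$ with $F=A_0+|u|_{L^\infty(G_1)}+|f|_{C^\alpha(\bar G_1)}$.

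Unscaling produces pointwise bounds $|Du|\le CFt_0^{\alpha-1}$, $|D^2u|\le CFt_0^{\alpha-2}$, and matching Hölder seminorm bounds, on $B_{t_0/4}(x_0)$. Since $t$ varies by a bounded factor across this ball, multiplying by appropriate powers of $t$ absorbs the singular prefactors and should produce both the pointwise decay
\[
|u(x',t)-u_0(x')|+|tDu(x',t)|+|t^2D^2u(x',t)|\le CFt^\alpha
\]
and the interior bound $[u]_{C^\alpha}+[tDu]_{C^\alpha}+[t^2D^2u]_{C^\alpha}\le CF$ on $B_{t_0/4}(x_0)$. Applying Lemma~\ref{lemma-ch3-GlobalHolder} to $w=u$ with $w_0=u_0\in C^\alpha(\overline{B'_1})$, and to $w=tDu$ and $w=t^2D^2u$ with $w_0\equiv 0$, should then give the claimed $C^\alpha$-regularity on $\bar G_r$ for every $r\in(0,1)$ and the stated estimate; the pointwise decay will also force $tDu=t^2D^2u=0$ on $\Sigma_1$.

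The main obstacle is the $t_0^\alpha$-smallness of $\tilde g$ in the second paragraph. This hinges on the boundary compatibility \eqref{eq-ch3-boundary-value}; without it, one would only obtain $|\tilde g|_{L^\infty}\le C|f|_{L^\infty}$, giving $|D^2u|\le Ct_0^{-2}$ and therefore only $|t^2D^2u|\le C$ rather than the decay $|t^2D^2u|\le CFt^\alpha$ required to invoke Lemma~\ref{lemma-ch3-GlobalHolder}. Once this step is in place the rest is careful bookkeeping of scales and Hölder seminorms; a minor cosmetic point is that Lemma~\ref{lemma-ch3-GlobalHolder} is stated for interior estimates on balls of radius $t/2$ whereas Schauder gives them on $B_{t_0/4}(x_0)$, but the ratio of radii is inessential and only affects the constant.
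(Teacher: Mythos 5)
Your proposal follows essentially the same route as the paper's proof: you subtract $u_0(x_0')$ so the resulting inhomogeneous term vanishes at $(x_0',0)$ by the compatibility $f=cu_0$ (together with the Hölder continuity of $f$ and $c$), rescale to a ball of radius comparable to $t_0$ where $L$ is uniformly elliptic, apply interior Schauder, unscale to obtain the pointwise decays $|u-u_0|+t|Du|+t^2|D^2u|\lesssim t^\alpha$ and matching Hölder seminorm bounds, and assemble via Lemma~\ref{lemma-ch3-GlobalHolder}. The only cosmetic difference is that you change variables to a unit ball while the paper works directly with the scaled norms $|\cdot|^*_{C^\alpha(B_{3t_0/4}(x_0))}$; these are equivalent, and the mismatch in ball radii ($B_{t_0/4}$ versus the $B_{t/2}$ appearing in Lemma~\ref{lemma-ch3-GlobalHolder}) is, as you note, inessential.
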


Note that \eqref{eq-ch3-decay-assumption-general} implies, for any $(x',t)\in G_1$ and any $x_0'\in B_1'$, 
\begin{equation}\label{eq-ch3-decay-assumption-general1}
|u(x',t)-u_0(x'_0)|\le \big(A_0+[u_0]_{C^\alpha(B'_1)}\big)(|x'-x'_0|^2+t^2)^{\frac\alpha2}.
\end{equation}

\begin{proof} 
Set  
$$F=|u|_{L^\infty(G_1)}+|f|_{C^\alpha(\bar G_1)}.$$
We take any $x_0=(x_0', t_0)\in G_{1/2}$. 
As in the proof of Lemma \ref{lemma-ch3-Linear-Estimate-alpha}, 
we have 
\begin{equation*}
L(u-u_0(x'_0))= h,\end{equation*}
where
$$h=f-f(x'_0,0)-u_0(x'_0)(c-c(x'_0,0)).$$
Hence, for any $(x',t)\in G_1$, 
$$|h(x',t)|\le CF(|x'-x_0'|^2+t^2)^{\frac\alpha2}.$$
Consider \eqref{eq-ch3-Equ-alpha-general}
in $B_{3t_0/4}(x_0)$. Note that $t_0/4\le t\le 7t_0/4$, for any $(x', t)\in B_{3t_0/4}(x_0)$. 
Hence, 
\begin{align*}\label{eq-ScalingAssumption}
|t^2a_{ij}|^*_{C^\alpha(B_{3t_0/4}(x_0))}
+t_0|tb_i|^*_{C^\alpha(B_{3t_0/4}(x_0))}
+t_0^{2}|c|^*_{C^\alpha(B_{3t_0/4}(x_0))}
\le Ct_0^2.
\end{align*}
Moreover, for any $\xi\in\mathbb R^n$ and any $x\in B_{3t_0/4}(x_0)$, 
\begin{align*}t^2a_{ij}(x)\xi_i\xi_j
\ge \frac{1}{16}\lambda t_0^2|\xi|^2.
\end{align*}
We write \eqref{eq-ch3-Equ-alpha-general} as 
$$t_0^{-2}L(u-u_0(x_0'))=t_0^{-2}h.$$
Note that
$$|h|^*_{C^\alpha(B_{3t_0/4}(x_0))}=|h|_{L^\infty(B_{3t_0/4}(x_0))}
+t_0^{\alpha}[h]_{C^\alpha(B_{3t_0/4}(x_0))}\le CFt_0^{\alpha},$$
and, by \eqref{eq-ch3-decay-assumption-general1}, 
$$|u-u_0(x'_0)|_{L^\infty(B_{3t_0/4}(x_0))}\le C(A_0+F)t_0^\alpha.$$
The scaled interior $C^{2,\alpha}$-estimate implies 
\begin{align*} 
&t_0^\alpha[u]_{C^\alpha(B_{t_0/2}(x_0))}
+t_0|Du|^*_{C^\alpha(B_{t_0/2}(x_0))}
+t_0^{2}|D^2u|^*_{C^\alpha(B_{t_0/2}(x_0))}\\
&\quad \le C\big\{|u-u_0(x'_0)|_{L^\infty(B_{3t_0/4}(x_0))}
+t_0^{2}\, t_0^{-2}|h|^*_{C^\alpha(B_{3t_0/4}(x_0))}\big\}
\le C(A_0+F)t_0^\alpha.
\end{align*}
This holds for any $x_0=(x_0', t_0)\in G_{1/2}$. 
Then, by \eqref{eq-ch3-decay-assumption-general} and evaluating at $x_0'=x'$
the above estimate of the 
$L^\infty$-norms of $D u$ and $D^2u$,  
we get, 
for any $(x', t)\in G_{1/2}$, 
\begin{equation}\label{eq-ch3-linear-estimate-alpha-general}
|u(x',t)-u_0(x')|+t|Du(x',t)|+t^2|D^2u(x',t)|\le C(A_0+F) t^\alpha.\end{equation}
The estimate of the H\"older semi-norms implies, for any $(x', t)\in G_{1/2}$,  
\begin{equation*}
[u]_{C^\alpha(B_{t/2}(x',t))}
+t[Du]_{C^\alpha(B_{t/2}(x',t))}+t^{2}[D^2u]_{C^\alpha(B_{t/2}(x',t))}
\le C(A_0+F),\end{equation*}
and hence 
\begin{equation*}
[u]_{C^\alpha(B_{t/2}(x',t))}
+[sDu]_{C^\alpha(B_{t/2}(x',t))}+[s^2D^2u]_{C^\alpha(B_{t/2}(x',t))}
\le C(A_0+F).\end{equation*}
Therefore, we obtain the desired result by applying Lemma \ref{lemma-ch3-GlobalHolder}
to $u$, $tDu$, and $t^2D^2u$. 
\end{proof}

We are ready to prove a result concerning the H\"older continuity up to $\Sigma_1$.

\begin{theorem}\label{thrm-ch3-Linear-Regularity-0-alpha}
For some constant 
$\alpha\in (0,1)$, assume $a_{ij}, b_i, c\in C^{\alpha}(\bar G_1)$, 
with \eqref{eq-ch3-ellipticity-t}, and $c\le -c_0$ 
and $Q({\alpha})\le -c_{\alpha}$  in $G_1$, for some positive constants 
$c_0$ and $c_{\alpha}$.
For some $f\in C^\alpha(\bar G_1)$, let $u\in L^\infty(G_1)\cap C^2(G_1)$ be 
a solution of \eqref{eq-ch3-Equ}. 
Then, for any $r\in (0,1)$, 
\begin{equation}\label{eq-ch3-regularity-alpha}
u, tDu, t^2D^2u\in C^\alpha(\bar G_r),\end{equation}
with $u=u_0$, $tDu=0$, and $t^2D^2u=0$ on $\Sigma_1$, and 
\begin{align}\label{eq-ch3-estimate-alpha-alpha}
|u|_{C^\alpha(\bar G_{1/2})}+|tDu|_{C^\alpha(\bar G_{1/2})}
+|t^2D^2u|_{C^\alpha(\bar G_{1/2})}
\le C\big\{|u|_{L^\infty(G_1)}+|f|_{C^\alpha(\bar G_1)}\big\},\end{align}
where $C$ is a positive constant depending only on $n$, $\lambda$, 
$\alpha$,  $c_0$, $c_\alpha$, and
the $C^\alpha$-norms of $a_{ij}, b_i, c$ in $\bar G_1$.
\end{theorem}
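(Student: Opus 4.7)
The plan is to combine Lemmas \ref{lemma-ch3-Linear-Estimate-alpha} and \ref{lemma-ch3-Linear-Regularity-general} in sequence. The extra hypothesis $Q(\alpha)\le-c_\alpha$ in $G_1$ of the present theorem, beyond what Lemma \ref{lemma-ch3-Linear-Regularity-general} requires, is exactly what enables the decay lemma to be invoked.

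First I would apply Lemma \ref{lemma-ch3-Linear-Estimate-alpha}: under our hypotheses it applies and yields, for any $r\in(0,1)$, any $x_0'\in B'_r$, and any $(x',t)\in G_1$,
\begin{align*}
|u(x',t)-u_0(x_0')|\le C(|x'-x_0'|^2+t^2)^{\alpha/2}F,
\end{align*}
where $F=|u|_{L^\infty(G_1)}+|f|_{C^\alpha(\bar G_1)}$. Letting $(x',t)\to(x_0',0)$ shows that $u$ extends continuously to $\Sigma_r$ with boundary value $u_0$, so $u\in C(\bar G_r)\cap C^2(G_r)$. Specializing $x_0'=x'$ produces exactly the decay hypothesis \eqref{eq-ch3-decay-assumption-general} of Lemma \ref{lemma-ch3-Linear-Regularity-general} with $A_0=CF$. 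I would then invoke Lemma \ref{lemma-ch3-Linear-Regularity-general} (after a trivial rescaling, which preserves the structural form of $L$ and leaves $Q(\mu)$ unchanged) to obtain
\begin{align*}
|u|_{C^\alpha(\bar G_{1/2})}+|tDu|_{C^\alpha(\bar G_{1/2})}+|t^2D^2u|_{C^\alpha(\bar G_{1/2})}\le CF,
\end{align*}
together with the boundary vanishing $tDu=t^2D^2u=0$ on $\Sigma_{1/2}$, which gives \eqref{eq-ch3-estimate-alpha-alpha}; the statement \eqref{eq-ch3-regularity-alpha} for arbitrary $r\in(0,1)$ then follows by varying the rescaling parameter.

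There is essentially no genuinely new obstacle at this stage: the two nontrivial ingredients---the homogeneous supersolutions of Lemma \ref{lemma-ch3-Linear-ComparisonFunction-t-factor}, which drive the decay, and the calculus Lemma \ref{lemma-ch3-GlobalHolder}, which promotes interior Schauder estimates on shrinking balls near $\Sigma_1$ to global H\"older norms---have already been developed. The only point that demands care is a bit of bookkeeping on domains: Lemma \ref{lemma-ch3-Linear-Estimate-alpha} is stated with $x_0'$ ranging over $B'_r$ for strict $r<1$, whereas the interior-ball argument at a point $(x_0',t_0)\in G_{1/2}$ inside the proof of Lemma \ref{lemma-ch3-Linear-Regularity-general} uses balls $B_{3t_0/4}(x_0)\subset G_{7/8}$. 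Choosing the intermediate radius $r=7/8$ in the decay step renders the two pieces compatible, and the argument then runs as in the proof of Lemma \ref{lemma-ch3-Linear-Regularity-general}.
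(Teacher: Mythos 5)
Your proposal is correct and matches the paper's proof exactly: the paper likewise invokes Lemma \ref{lemma-ch3-Linear-Estimate-alpha} to produce the decay hypothesis \eqref{eq-ch3-decay-assumption-general} with $A_0=C\{|u|_{L^\infty(G_1)}+|f|_{C^\alpha(\bar G_1)}\}$, then concludes by Lemma \ref{lemma-ch3-Linear-Regularity-general}. Your extra remark on choosing an intermediate radius such as $r=7/8$ to reconcile the domains is a sensible bit of bookkeeping that the paper leaves implicit, but it does not change the route.
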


\begin{proof} 
By Lemma \ref{lemma-ch3-Linear-Estimate-alpha}, 
\eqref{eq-ch3-linear-estimate-alpha} holds and implies the assumption 
\eqref{eq-ch3-decay-assumption-general} with 
$$A_0=C\{|u|_{L^\infty(G_1)}+|f|_{C^\alpha(\bar G_1)}\}.$$ 
Hence, Lemma \ref{lemma-ch3-Linear-Regularity-general} 
implies the desired result. \end{proof} 

According to \eqref{eq-ch3-linear-estimate-alpha-general} 
in the proof of Lemma \ref{lemma-ch3-Linear-Regularity-general}, we have, 
for any $(x', t)\in G_{1/2}$,  
\begin{align}\label{eq-ch3-linear-decay-estimate-alpha-general}
|u(x',t)-u_0(x')|+t|Du(x',t)|+t^2|D^2u(x',t)|
\le Ct^\alpha\{|u|_{L^\infty(G_1)}+|f|_{C^\alpha(\bar G_1)}\big\}.
\end{align}

\section{The Tangential Regularity}\label{sec-OptimalRegularity-linear-tangential-regularity} 

Now, we start to study the regularity of solutions 
along tangential directions. We will prove that 
solutions are as regular tangentially as allowed near the boundary.

We first describe our approach. 
We aim to prove $D_{x'}u\in C^\alpha(\bar G_r)$, for any $r\in (0,1)$, under the assumption 
$a_{ij}, b_i, c, f, D_{x'}a_{ij}, D_{x'}b_i, D_{x'}c, D_{x'}f\in C^{\alpha}(\bar G_1)$.
The interior Schauder theory (along the tangential directions) implies $D_{x'}D^2u\in C^\alpha(G_1)$. 
Fix a unit vector $e\in \mathbb R^{n-1}\times\{0\}$. 
A simple differentiation of  $Lu=f$ along the direction $e$ yields 
$$L(\partial_eu)+t^2\partial_ea_{ij}\partial_{ij}u
+t\partial_eb_{i}\partial_{i}u+\partial_ecu=\partial_ef.$$
By 
Theorem \ref{thrm-ch3-Linear-Regularity-0-alpha}, 
or \eqref{eq-ch3-regularity-alpha} in particular, we 
have  $t^2\partial_ea_{ij}\partial_{ij}u, 
t\partial_eb_{i}\partial_{i}u$, $\partial_ecu\in 
C^\alpha(\bar G_r)$, for any $r\in (0,1)$. 
We now move these terms to the right-hand side and write 
\begin{equation}\label{eq-ch3-equation-u-k}L(\partial_eu)=f_1,\end{equation}
where 
\begin{equation}\label{eq-ch3-equation-u-k-RHS}f_1=\partial_ef-t^2\partial_ea_{ij}\partial_{ij}u
-t\partial_eb_{i}\partial_{i}u-\partial_ecu.\end{equation}
Then,  $f_1\in 
C^\alpha(\bar G_r)$, for any $r\in (0,1)$. By Theorem \ref{thrm-ch3-Linear-Regularity-0-alpha} again, 
$tDu=0$ and $t^2D^2u=0$ on $\Sigma_1$. Hence,  
$$\frac{f_1}{c}=\frac{1}{c}(\partial_ef-\partial_ecu)=\frac{1}{c}\Big(\partial_ef-\frac{f}{c}\partial_ec\Big)
=\partial_eu_0\quad\text{on }\Sigma_1.$$
If we can prove that $\partial_eu$ is bounded in $G_1$, 
then we can apply 
Theorem \ref{thrm-ch3-Linear-Regularity-0-alpha} 
to \eqref{eq-ch3-equation-u-k} to conclude
$\partial_eu\in C^\alpha(\bar G_r)$, 
for any $r\in (0,1)$.
This will be the desired tangential regularity. 

We now prove the boundedness of tangential derivatives.

\begin{lemma}\label{lemma-ch3-Linear-C{1}Estimate}
For some  constant 
$\alpha\in (0,1)$, assume $a_{ij}, b_i, c\in C^{\alpha}(\bar G_1)$, 
with $D_{x'}a_{ij}$, $D_{x'}b_i$, $D_{x'}c\in C(\bar G_1)$, 
\eqref{eq-ch3-ellipticity-t}, and $c\le -c_0$ 
and $Q(\alpha)\le -c_\alpha$  in $G_1$, for some positive constants 
$c_0$ and $c_\alpha$.
For some $f\in C^{\alpha}(\bar G_1)$ with $D_{x'}f\in C(\bar G_1)$, 
let $u\in C(\bar G_1)\cap C^2(G_1)$ be 
a solution of \eqref{eq-ch3-Equ} and \eqref{eq-ch3-Dirichlet}.  
Then, $D_{x'}u$ is bounded in $G_r$, for any $r\in (0,1)$, and 
\begin{align}\label{eq-ch3-estimate-tangential-boundedness}
|D_{x'}u|_{L^\infty(G_{1/2})}\le C\big\{|u|_{L^\infty(G_1)}
+|f|_{C^{\alpha}(\bar G_1)}+|D_{x'}f|_{L^\infty(G_1)}\big\},\end{align}
where $C$ is a positive constant depending only on $n$,  $\lambda$, 
$\alpha$, $c_0$, $c_\alpha$,
the $C^{\alpha}$-norms of $a_{ij}, b_i, c$ in $G_1$, 
and the $L^\infty$-norms of $D_{x'}a_{ij}, D_{x'}b_i, D_{x'}c$ in $G_1$.
\end{lemma}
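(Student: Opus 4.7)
The plan is to obtain the bound by applying the maximum principle to a tangential difference quotient of $u$, exploiting the coefficient regularity in $x'$ and the decay estimates of Theorem~\ref{thrm-ch3-Linear-Regularity-0-alpha}. Fix a unit vector $e\in\mathbb{R}^{n-1}\times\{0\}$ and, for small nonzero $h$, set
$$u_h(x)=\frac{u(x+he)-u(x)}{h},\qquad v(x)=u(x+he).$$
A direct computation using the equation $Lu=f$ at $x$ and at $x+he$ (noting that $e$ is tangential, so the factor $t$ is unchanged) yields
$$Lu_h=f^h-t^2 a_{ij}^h\,\partial_{ij}v-t\,b_i^h\,\partial_i v-c^h\,v,$$
where superscript $h$ denotes the tangential difference quotient in direction $e$. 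Since $D_{x'}f,D_{x'}a_{ij},D_{x'}b_i,D_{x'}c$ lie in $C(\bar G_1)$ and are therefore uniformly bounded, these difference quotients are bounded in $h$. Theorem~\ref{thrm-ch3-Linear-Regularity-0-alpha} bounds $u$, $tDu$, and $t^2D^2u$ in $L^\infty(G_{3/4})$ by $C(|u|_{L^\infty(G_1)}+|f|_{C^\alpha(\bar G_1)})$, so
$$|Lu_h|_{L^\infty(G_{3/4})}\le M$$
uniformly in small $h$, where $M$ depends only on the data named in the statement.

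Next I would control $u_h$ on $\partial G_{3/4}$. On the flat face $\Sigma_{3/4}$, $u_h(\cdot,0)$ is a difference quotient of $u_0=f/c$; the hypotheses $D_{x'}f,D_{x'}c\in L^\infty$ and $c\le -c_0<0$ yield $|u_h(\cdot,0)|\le|D_{x'}u_0|_{L^\infty}\le C(|f|_{C^\alpha(\bar G_1)}+|D_{x'}f|_{L^\infty(G_1)})$. On the remainder of $\partial G_{3/4}$ the coordinate $t$ is bounded below, so $L$ is uniformly elliptic on balls of fixed radius centered at those points, and interior Schauder theory applied to $Lu=f$ bounds $|Du|$, hence $|u_h|$ for $h$ small, by $C(|u|_{L^\infty(G_1)}+|f|_{C^\alpha(\bar G_1)})$. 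Denote by $B$ the resulting uniform bound for $|u_h|$ on $\partial G_{3/4}$.

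Setting $\psi\equiv M/c_0+B$ and using $c\le -c_0$, we obtain $L\psi=c\psi\le -M$, so $L(\pm u_h-\psi)\ge 0$ in $G_{3/4}$ while $\pm u_h-\psi\le 0$ on $\partial G_{3/4}$. Every interior maximum point of $\pm u_h-\psi$ lies at $t>0$, where $L$ is strictly elliptic, so the weak maximum principle gives $|u_h|\le\psi$ in $G_{3/4}$, uniformly in $h$. Since $u\in C^2(G_1)$, $u_h(x)\to\partial_e u(x)$ pointwise, and therefore $|\partial_e u|\le\psi$ in $G_{3/4}\supset G_{1/2}$; taking the supremum over unit tangential directions $e$ yields the desired bound \eqref{eq-ch3-estimate-tangential-boundedness}.

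The main obstacle is ensuring the boundary bound for $u_h$ on $\Sigma_{3/4}$ remains uniform as $h\to 0$. This relies on the strict negativity $c\le-c_0<0$, which simultaneously promotes the forced boundary identity $u_0=f/c$ to a tangentially Lipschitz function and supplies the constant supersolution $\psi$. Equally crucial are the bounds $|tDu|,|t^2D^2u|\le C$ from Theorem~\ref{thrm-ch3-Linear-Regularity-0-alpha}, without which the remainder terms in the equation for $u_h$ would blow up as $t\to 0$.
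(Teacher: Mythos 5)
Your overall strategy (tangential difference quotients plus the maximum principle, fueled by the weighted bounds from Theorem~\ref{thrm-ch3-Linear-Regularity-0-alpha}) is the paper's strategy, but there is a genuine gap in the boundary control: you assert that on $\partial G_{3/4}\setminus\Sigma_{3/4}$ the coordinate $t$ is bounded below. That is true only for the top face $B'_{3/4}\times\{3/4\}$; on the lateral face $\partial B'_{3/4}\times(0,3/4)$, $t$ ranges down to $0$. There the bound you invoke fails. Indeed, the decay estimate \eqref{eq-ch3-linear-decay-estimate-alpha-general} gives only $t|Du|\le CFt^\alpha$, i.e.\ $|Du|\le CF\,t^{\alpha-1}$, which blows up as $t\to0$ since $\alpha<1$. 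By the mean value theorem $u_h(x)=\partial_e u(\xi)$ with $\xi$ at the same height $t$, so $|u_h|$ on the lateral face near the corner cannot be bounded uniformly in $h$; equivalently, the H\"older bound $|u(x+he)-u(x)|\le C|h|^\alpha$ only yields $|u_h|\le C|h|^{\alpha-1}$, which diverges as $h\to0$. Consequently the constant barrier $\psi\equiv M/c_0+B$ does not dominate $\pm u_h$ on all of $\partial G_{3/4}$, and the comparison argument breaks down.

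The paper avoids this precisely by multiplying $u_\tau$ by a cutoff $\eta=\eta(x')\in C_0^\infty(B'_{3/4})$ with $\eta=1$ on $B'_{1/2}$, so that $\eta u_\tau$ vanishes identically on the lateral face. The price is the commutator term $h_2=2t^2a_{ij}\partial_i\eta\,\partial_j u_\tau+(ta_{ij}\partial_{ij}\eta+b_i\partial_i\eta)\,t u_\tau$, but this is controlled uniformly in $\tau$: writing $t u_\tau$ and $t^2\partial_j u_\tau$ via the mean value theorem as $t\partial_e u(\xi)$ and $t^2\partial_e\partial_j u(\xi_j)$ at shifted points with the same $t$-height, the decay estimate $t|Du|+t^2|D^2 u|\le CFt^\alpha$ bounds $h_2$ in $L^\infty$. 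After that, the maximum principle for $L(\eta u_\tau)=\eta h_1+h_2$ in $G_{3/4}$ gives the claimed bound, with boundary contributions only from $\Sigma_{3/4}$ and the top face, exactly as you treated them. So your $\Sigma_{3/4}$ and top-face estimates, your use of $c\le-c_0$, and your passage $h\to0$ are all fine; the missing ingredient is the localizing cutoff needed to kill the lateral boundary.
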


\begin{proof} We fix a unit vector $e\in \mathbb R^{n-1}\times\{0\}$
and prove that $\partial_eu$ is bounded in $G_r$, for any $r\in (0,1)$. Set 
\begin{align*}F=|u|_{L^\infty(G_1)}+|f|_{C^\alpha(\bar G_1)}+|D_{x'}f|_{L^\infty(G_1)}.\end{align*}
Take any $\tau$ small and define 
$$u_\tau(x)=\frac{1}{\tau}[u(x+\tau e)-u(x)],$$ 
and similarly $a_{ij,\tau}$, $b_{i,\tau}$, $c_\tau$, and $f_\tau$. Evaluate the equation $Lu=f$
at $x+\tau e$ and $x$, take the difference, and then divide by $\tau$. Hence, 
\begin{equation}\label{eq-ch3-equation-1alpha1-bounded}
Lu_\tau=h_1,\end{equation}
where 
\begin{equation}\label{eq-ch3-equation-1alpha2-bounded}
h_1=f_\tau-t^2a_{ij,\tau}\partial_{ij}u(\cdot+\tau e)
-tb_{i,\tau}\partial_{i}u(\cdot+\tau e)-c_\tau u(\cdot+\tau e).\end{equation}
By  \eqref{eq-ch3-linear-decay-estimate-alpha-general}, 
we have, for any $x=(x',t)\in G_{7/8}$,  
\begin{equation}\label{eq-ch3-equation-1alpha3-bounded}
t|Du(x)|+t^2|D^2u(x)|
\le CFt^\alpha.\end{equation}
This implies 
$$|h_1|_{L^\infty(G_{3/4})}\le CF,$$
where $C$ is a positive constant 
independent of $\tau$. 

Take a cutoff function $\eta=\eta(x')\in C_0^\infty(B'_{3/4})$, with $\eta=1$ in $B'_{1/2}$. Then, 
\begin{equation}\label{eq-ch3-equation-1alpha4-bounded}
L(\eta u_\tau)=\eta h_1+h_2,\end{equation}
where 
$$h_2=2t^2a_{ij}\partial_{i}\eta \partial_ju_\tau
+(ta_{ij}\partial_{ij}\eta+b_i\partial_i\eta)tu_\tau.$$
Similarly, we have 
$$|h_2|_{L^\infty(G_{3/4})}\le CF.$$
We now examine $\eta u_\tau$ on $\partial G_{3/4}$. 
First, $\eta u_\tau=0$ on $\partial B'_{3/4}\times (0,3/4)$. Next, 
the interior $C^1$-estimate (applied to the equation $Lu=f$) implies 
$$|Du|_{L^\infty(B'_{7/8}\times \{3/4\})}\le C\big\{|u|_{L^\infty(G_1)}+|f|_{L^\infty(G_1)}\big\}\le CF.$$
Hence, 
$$|\eta u_\tau|_{L^\infty(B'_{3/4}\times \{3/4\})}\le  CF.$$
Last, on $\Sigma_{3/4}$, we have $u=u_0$ and hence 
$$|\eta u_\tau|_{L^\infty(\Sigma_{3/4})}\le C\big\{|f|_{L^\infty(G_1)}+|D_{x'}f|_{L^\infty(G_1)}\big\}\le CF.$$
Note that $c\le -c_0<0$ in \eqref{eq-ch3-equation-1alpha4-bounded}. By applying the maximum principle and 
considering the maximum and minimum of $\eta u_\tau$ 
in $G_{3/4}$ and on $\partial G_{3/4}$ separately, we obtain 
$$|\eta u_\tau|_{L^\infty(G_{3/4})}\le |\eta u_\tau|_{L^\infty(\partial G_{3/4})}
+C\big\{|h_1|_{L^\infty(G_{3/4})}+|h_2|_{L^\infty(G_{3/4})}\big\}\le CF,$$
where $C$ is a positive constant independent of $\tau$. By letting $\tau\to 0$, we get 
\begin{equation}\label{eq-equation-1alpha5-bounded}
|\partial_eu|_{L^\infty(G_{1/2})}\le CF.\end{equation}
This implies \eqref{eq-ch3-estimate-tangential-boundedness}. 
\end{proof}

We are ready to prove the regularity of tangential derivatives. 
In the following, $\tau$ is always a nonnegative integer.

\begin{theorem}\label{thrm-ch3-Linear-TangentialEstimate-general}
For some integer $\ell\ge 1$ and constant 
$\alpha\in (0,1)$, 
assume $D^\tau_{x'}a_{ij}$, $D^\tau_{x'}b_i$, $D^\tau_{x'}c\in C^{\alpha}(\bar G_1)$
for any $\tau\le \ell$, 
with \eqref{eq-ch3-ellipticity-t}, 
and $c\le -c_0$ 
and $Q(\alpha)\le -c_\alpha$  in $G_1$, for some positive constants 
$c_0$ and $c_\alpha$. 
For some $f$ with 
$D^\tau_{x'}f\in C^{\alpha}(\bar G_1)$ for any $\tau\le\ell$, let $u\in C(\bar G_1)\cap C^2(G_1)$ be 
a solution of \eqref{eq-ch3-Equ} and \eqref{eq-ch3-Dirichlet}.  
Then, for any  $\tau\le \ell$, and any $r\in (0,1)$, 
\begin{equation}\label{eq-ch3-regularity-tangential-linear}
D^\tau_{x'}u, tDD^\tau_{x'}u, t^2D^2D^\tau_{x'}u\in C^\alpha(\bar G_r),\end{equation}
with $D^\tau_{x'}u=D_{x'}^\tau u_0$, $tDD^\tau_{x'}u=0$, and $t^2D^2D^\tau_{x'}u=0$ on $\Sigma_1$, and 
\begin{align}\label{eq-ch3-estimate-tangential-linear}\begin{split}
&|D^\tau_{x'}u|_{C^\alpha(\bar G_{1/2})}+|tDD^\tau_{x'}u|_{C^\alpha(\bar G_{1/2})}
+|t^2D^2D^\tau_{x'}u|_{C^\alpha(\bar G_{1/2})}\\
&\qquad
\le C\Big\{|u|_{L^\infty(G_1)}+\sum_{i=0}^\ell |D_{x'}^if|_{C^{\alpha}(\bar G_1)}\Big\},
\end{split}\end{align}
where $C$ is a positive constant depending only on $n$, $\ell$, $\lambda$, 
$\alpha$, $c_0$, $c_\alpha$, and
the $C^{\alpha}$-norms of $D_{x'}^\tau a_{ij}, D_{x'}^\tau b_i, D_{x'}^\tau c$ in $\bar G_1$ for $\tau\le\ell$.
\end{theorem}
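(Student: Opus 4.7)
The plan is to prove the theorem by induction on $\tau$, reducing each step to an application of Theorem \ref{thrm-ch3-Linear-Regularity-0-alpha}. The base case $\tau = 0$ is exactly that theorem. For the inductive step, assume the conclusion at all orders $\le \tau - 1$, fix a multi-index $\beta \in \mathbb Z_{\ge 0}^{n-1}$ with $|\beta| = \tau$, and set $w = \partial^\beta_{x'} u$. Differentiating $Lu = f$ by $\partial^\beta_{x'}$ via the Leibniz rule yields
\begin{equation*}
Lw = g_\beta \quad \text{in } G_1,
\end{equation*}
where $g_\beta$ equals $\partial^\beta_{x'} f$ minus a sum of terms of the form $t^2 (\partial^\gamma_{x'} a_{ij})\, \partial_{ij} \partial^{\beta-\gamma}_{x'} u$, $t (\partial^\gamma_{x'} b_i)\, \partial_i \partial^{\beta-\gamma}_{x'} u$, and $(\partial^\gamma_{x'} c)\, \partial^{\beta-\gamma}_{x'} u$ with $0 < \gamma \le \beta$. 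By the inductive hypothesis each $\partial^{\beta-\gamma}_{x'} u$, $t D \partial^{\beta-\gamma}_{x'} u$, and $t^2 D^2 \partial^{\beta-\gamma}_{x'} u$ lies in $C^\alpha(\bar G_r)$ for every $r \in (0,1)$; combined with the assumed $C^\alpha$-regularity of the tangential derivatives of the coefficients and of $f$, this gives $g_\beta \in C^\alpha(\bar G_r)$.

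For the compatible Dirichlet data, the inductive hypothesis asserts $t D \partial^{\beta-\gamma}_{x'} u = 0$ and $t^2 D^2 \partial^{\beta-\gamma}_{x'} u = 0$ on $\Sigma_1$, so on $\Sigma_1$ only zeroth-order terms survive and
\begin{equation*}
g_\beta\big|_{\Sigma_1} = \partial^\beta_{x'} f - \sum_{0 < \gamma \le \beta} \binom{\beta}{\gamma}(\partial^\gamma_{x'} c)\,\partial^{\beta-\gamma}_{x'} u_0 = c\cdot \partial^\beta_{x'} u_0,
\end{equation*}
the last equality following from $\partial^\beta_{x'}(c u_0) = \partial^\beta_{x'} f$ on $\Sigma_1$ (since $c u_0 = f$ there). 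Hence $g_\beta/c = \partial^\beta_{x'} u_0$ on $\Sigma_1$, exactly the Dirichlet datum needed to apply Theorem \ref{thrm-ch3-Linear-Regularity-0-alpha} to $Lw = g_\beta$.

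The main obstacle is obtaining an a priori $L^\infty$-bound on $w$ on a slightly smaller cylinder, a prerequisite for applying Theorem \ref{thrm-ch3-Linear-Regularity-0-alpha}. The idea is to extend the difference-quotient argument of the preceding lemma inductively: form the $\tau$-fold tangential difference quotient $u_{\beta,h}$, multiply by a cutoff $\eta(x') \in C_0^\infty(B'_{r'})$, and derive $L(\eta u_{\beta,h}) = \eta h_1 + h_2$, where $h_1$ arises from differentiating $Lu = f$ and $h_2$ from commuting $L$ with $\eta$. Both right-hand sides are bounded uniformly in $h$, thanks to the decay estimate \eqref{eq-ch3-linear-decay-estimate-alpha-general} at the previous order, which controls the lower-order derivatives carrying prefactors $t$ and $t^2$. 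On the lateral boundary $\eta u_{\beta,h}$ vanishes, on the top face it is controlled by an interior Schauder estimate, and on $\Sigma_{r'}$ by the identity $u = f/c$ together with the assumed regularity of $f$ and $c$. The maximum principle, applicable since $c \le -c_0 < 0$, bounds $\eta u_{\beta,h}$ uniformly in $h$, and passing $h \to 0$ delivers the required $L^\infty$-bound. With $g_\beta \in C^\alpha(\bar G_r)$, matching boundary data, and $w \in L^\infty$ in hand, Theorem \ref{thrm-ch3-Linear-Regularity-0-alpha} yields $w$, $tDw$, $t^2 D^2 w \in C^\alpha(\bar G_r)$ along with the estimate \eqref{eq-ch3-estimate-tangential-linear} at level $\tau$, closing the induction.
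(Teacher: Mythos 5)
Your proposal is correct and takes essentially the same route as the paper: differentiate the equation tangentially, control the new right-hand side using the established weighted H\"older regularity from the previous step, establish boundedness of the tangential derivative via a difference-quotient/cutoff/maximum-principle argument (the content of Lemma \ref{lemma-ch3-Linear-C{1}Estimate}), verify the induced Dirichlet compatibility, and re-apply Theorem \ref{thrm-ch3-Linear-Regularity-0-alpha}. The only cosmetic difference is that you induct on the order $\tau$ with a $\tau$-fold difference quotient, whereas the paper inducts on $\ell$ by differentiating once and feeding the equation for $\partial_e u$ back into the theorem at level $\ell-1$; the underlying estimates are identical.
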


\begin{proof} The proof is based on an induction on $\ell$. We first consider $\ell=1$. 
Set 
$$F=|u|_{L^\infty(G_1)}+|f|_{C^{\alpha}(\bar G_1)}
+|D_{x'}f|_{C^{\alpha}(\bar G_1)}.$$ 
Fix a unit vector $e\in \mathbb R^{n-1}\times\{0\}$. 
Then, $\partial_eu$ is a solution of \eqref{eq-ch3-equation-u-k}, 
with $f_1$ given by \eqref{eq-ch3-equation-u-k-RHS}. 
First, by Lemma \ref{lemma-ch3-Linear-C{1}Estimate}, 
we get $\partial_eu\in L^\infty(G_r)$ for any $r\in (0,1)$. 
Next, by  $a_{ij}, b_i, c, f, D_{x'}a_{ij}, D_{x'}b_i, D_{x'}c, D_{x'}f\in C^{\alpha}(\bar G_1)$ and 
Theorem \ref{thrm-ch3-Linear-Regularity-0-alpha}, 
or \eqref{eq-ch3-regularity-alpha} in particular, 
we have $f_1\in 
C^\alpha(\bar G_r)$, for any $r\in (0,1)$, and 
$$|f_1|_{C^{\alpha}(\bar G_r)}\le CF.$$
By \eqref{eq-ch3-equation-u-k-RHS} and $tDu=0$ and $t^2D^2u=0$ on $\Sigma_1$, we have 
$$\partial_eu_0=\frac{f_1}{c}(\cdot, 0)\quad\text{on }B'_1.$$
By applying Theorem \ref{thrm-ch3-Linear-Regularity-0-alpha} to the equation \eqref{eq-ch3-equation-u-k}, 
we conclude, for any $r\in (0,1)$, 
\begin{equation*}
\partial_eu, tD\partial_eu, t^2D^2\partial_eu\in C^\alpha(\bar G_r),\end{equation*}
with $\partial_eu=\partial_eu_0$, 
$tD\partial_eu=0$ and $t^2D^2\partial_eu=0$ on $\Sigma_1$, and 
\begin{align*}
|\partial_eu|_{C^\alpha(\bar G_{1/2})}+|tD\partial_eu|_{C^\alpha(\bar G_{1/2})}
+|t^2D^2\partial_eu|_{C^\alpha(\bar G_{1/2})}\le CF.
\end{align*}
This implies the desired result for $\ell=1$. 

The proof for the general $\ell$ is based on induction and hence omitted. 
\end{proof}

\section{Regularity along the Normal Direction}\label{sec-OptimalRegularity-normal-regularity}

In this section, we discuss regularity along the normal direction. 
We will prove that 
solutions are regular along the normal direction up to a certain order.

We first describe our approach. 
We aim to prove $\partial_tu\in C^\alpha(\bar G_r)$ for any $r\in (0,1)$, under the assumption 
$a_{ij}, b_i, c, f\in C^{1,\alpha}(\bar G_1)$.
The interior Schauder theory implies $u\in C^{3,\alpha}(G_1)$. 
A simple differentiation of  $Lu=f$ with respect to $t$ yields 
\begin{align*}
L(\partial_tu)+t^2\partial_{t}a_{ij}\partial_{ij}u+t\partial_tb_i\partial_iu+\partial_tcu+2ta_{ij}\partial_{ij}u+b_i\partial_iu=
\partial_tf.\end{align*}
By $f\in C^{1,\alpha}(\bar G_1)$, we have 
$\partial_tf\in C^{\alpha}(\bar G_1)$.
Next, by 
$a_{ij}, b_i, c\in C^{1,\alpha}(\bar G_1)$, Theorem \ref{thrm-ch3-Linear-Regularity-0-alpha}
and Theorem \ref{thrm-ch3-Linear-TangentialEstimate-general},  
or \eqref{eq-ch3-regularity-alpha} and \eqref{eq-ch3-regularity-tangential-linear} with $\ell=\tau=1$
in particular, we 
have 
$t^2\partial_{t}a_{ij}\partial_{ij}u$, $t\partial_tb_i\partial_iu$, $\partial_tcu\in 
C^\alpha(\bar G_r)$ and  
$ta_{i\beta}\partial_{i\beta}u$, $b_\beta\partial_\beta u\in 
C^\alpha(\bar G_r)$, for any $r\in (0,1)$. 
We now move these terms to the right-hand side and combine the remaining two terms 
$2ta_{in}\partial_{it}u$ and 
$b_n\partial_tu$ with $L(\partial_tu)$. 
(In fact, we write $4a_{n\beta}\partial_{t\beta}u=2a_{n\beta}\partial_{t\beta}u+2a_{n\beta}\partial_{t\beta}u$, 
keep one $2a_{n\beta}\partial_{t\beta}u$ in the left-hand side, and move the other $2a_{n\beta}\partial_{t\beta}u$
to the right-hand side.)
Hence, 
\begin{align*}
&t^2a_{ij}\partial_{ij}(\partial_tu)+t(b_i+2a_{in})\partial_i(\partial_tu)+(c+b_n)(\partial_tu)\\
&\qquad=
\partial_tf-t^2\partial_{t}a_{ij}\partial_{ij}u-t\partial_tb_i\partial_iu-\partial_tcu
-2ta_{i\beta}\partial_{i\beta}u-b_\beta \partial_\beta u.\end{align*}
We regard the left-hand side as an operator acting on $\partial_tu$. Set 
\begin{align*} 
L^{(1)}=t^2a_{ij}\partial_{ij}+t(b_i+2a_{in})\partial_i+(c+b_n).\end{align*}
Then, 
\begin{equation}\label{eq-ch3-equation-u-t}L^{(1)}(\partial_tu)=f_1,\end{equation}
where 
\begin{equation}\label{eq-ch3-equation-u-t-RHS}f_1=
\partial_tf-t^2\partial_{t}a_{ij}\partial_{ij}u-t\partial_tb_i\partial_iu-\partial_tcu
-2ta_{i\beta}\partial_{i\beta}u-b_\beta \partial_\beta u.\end{equation}
Hence, 
$f_1\in 
C^\alpha(\bar G_r)$, for any $r\in (0,1)$. 
We note that $L^{(1)}$ has a similar structure as $L$ and thus has an associated 
$Q^{(1)}(\mu)$ given by 
$$Q^{(1)}(\mu)=\mu(\mu-1)a_{nn}+\mu(b_n+2a_{nn})+(c+b_n).$$
Then, 
\begin{equation}\label{eq-ch3-relation-P-1}
Q^{(1)}(\mu)=Q({\mu+1}).\end{equation}
Next, we have 
$tDu=0$ and $t^2D^2u=0$ on $\Sigma_1$ by Theorem \ref{thrm-ch3-Linear-Regularity-0-alpha} 
and $tDD_{x'}u=0$ on $\Sigma_1$ by 
Theorem \ref{thrm-ch3-Linear-TangentialEstimate-general} with $\ell=\tau=1$. 
Then, 
$$\frac{f_1}{b_n+c}(\cdot, 0)=u_1\quad\text{on }B'_1,$$
where $u_1$ is defined by
\begin{equation}\label{eq-ch3-derivative-u-t-value}
u_1=\frac{\partial_tf-\partial_tcu-b_\beta\partial_\beta u}{b_n+c}(\cdot, 0)\quad\text{on }B'_1.
\end{equation}
If we can prove that $\partial_tu$ is continuous up to $\Sigma_1$ with $\partial_tu=u_1$ on $\Sigma_1$, 
then we can apply Theorem \ref{thrm-ch3-Linear-Regularity-0-alpha}
to \eqref{eq-ch3-equation-u-t} to conclude 
$\partial_tu\in C^\alpha(\bar G_r)$, 
for any $r\in (0,1)$, 
under an additional assumption $Q^{(1)}(\alpha)<0$ in $\bar G_1$.
This will be the desired regularity along the normal direction. 
We now carry out the above outline. 

We first prove a decay estimate of the normal derivative. 

\begin{lemma}\label{lemma-ch3-Linear-C{1,alpha}Estimate-normal}
For some  constant 
$\alpha\in (0,1)$, assume $a_{ij}, b_i, c\in C^{1,\alpha}(\bar G_1)$,  
with \eqref{eq-ch3-ellipticity-t}, and $c\le -c_0$ 
and $Q({1+\alpha})\le -c_{1+\alpha}$  in $G_1$, for some positive constants 
$c_0$ and $c_{1+\alpha}$. 
For some $f\in C^{1,\alpha}(\bar G_1)$, let $u\in C(\bar G_1)\cap C^2(G_1)$ be 
a solution of \eqref{eq-ch3-Equ} and \eqref{eq-ch3-Dirichlet}.  
Then, for any $(x',t)\in B_{1/2}'\times (0,1/2)$ and any $x_0'\in B'_{1/2}$, 
\begin{align}\label{eq-ch3-estimate-decau-2-k}\begin{split}
|\partial_tu(x',t)-u_1(x'_0)|
\le C\big\{|u|_{L^\infty(G_1)}+|f|_{C^{1,\alpha}(\bar G_1)}\big\}(|x'-x_0'|^2+t^2)^{\frac{\alpha}{2}},
\end{split}\end{align}
where $u_1$ is given by \eqref{eq-ch3-derivative-u-t-value} and 
$C$ is a positive constant depending only on $n$,  $\lambda$, 
$\alpha$, $c_0$, $c_{1+\alpha}$, and
the $C^{1,\alpha}$-norms of $a_{ij}, b_i, c$ in $\bar G_1$.
In particular, $\partial_tu\in C(\bar G_r)$, for any $r\in (0,1)$. 
\end{lemma}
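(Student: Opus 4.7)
The plan is to first establish a higher-order boundary decay estimate for $u$ itself, showing that $u$ is approximated to order $1+\alpha$ at each boundary point by its affine Taylor polynomial, and then to transfer this decay to $\partial_t u$ by scaled interior Schauder estimates. Two preparations are needed up front: since $Q(\mu)=\mu(\mu-1)a_{nn}+\mu b_n+c$ is convex in $\mu$, the hypotheses $Q(0)\le -c_0$ and $Q(1+\alpha)\le -c_{1+\alpha}$ force $c+b_n=Q(1)\le -c_1$ for some $c_1>0$; and Theorem \ref{thrm-ch3-Linear-TangentialEstimate-general} with $\ell=1$ gives $u_0\in C^{1,\alpha}(B'_{1/2})$ together with $\partial_\beta u\in C^\alpha(\bar G_{1/2})$ and $\partial_\beta u|_{\Sigma_{1/2}}=\partial_\beta u_0$, so that $u_1$ defined by \eqref{eq-ch3-derivative-u-t-value} lies in $C^\alpha(B'_{1/2})$.

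For fixed $x'_0\in B'_{1/2}$, introduce the affine approximation $p(x',t)=u_0(x'_0)+(x'-x'_0)\cdot\nabla_{x'}u_0(x'_0)+t\,u_1(x'_0)$ and set $w=u-p$. A direct calculation shows that $Lw=f-Lp$ lies in $C^{1,\alpha}(\bar G_1)$, vanishes at $(x'_0,0)$ thanks to the boundary relation $u_0=f/c$ on $\Sigma_1$, has tangential derivative at $(x'_0,0)$ equal to zero after differentiating $f=cu_0$ along $\Sigma_1$, and has normal derivative at $(x'_0,0)$ equal to zero by the defining equation \eqref{eq-ch3-derivative-u-t-value} for $u_1$. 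Therefore $|Lw(x',t)|\le CF(|x'-x'_0|^2+t^2)^{(1+\alpha)/2}$ with $F=|u|_{L^\infty(G_1)}+|f|_{C^{1,\alpha}(\bar G_1)}$. Invoking Lemma \ref{lemma-ch3-Linear-ComparisonFunction-t-factor} with $\sigma=0$, $\mu=1+\alpha$ supplies the supersolution $\psi=(\varepsilon|x'-x'_0|^2+t^2)^{(1+\alpha)/2}+Kt^{1+\alpha}$, satisfying $L\psi\le -\tfrac12 c_0(\varepsilon|x'-x'_0|^2+t^2)^{(1+\alpha)/2}$. The boundary values of $w$ are dominated by $AF\psi$ for some large $A$, since on $\Sigma_1$ we have $|w|\le CF|x'-x'_0|^{1+\alpha}$ by the $C^{1,\alpha}$-regularity of $u_0$, while on the remaining portion of $\partial G_{1/2}$ the function $\psi$ is bounded below by a positive constant. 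The maximum principle, as in the proof of Lemma \ref{lemma-ch3-Linear-Estimate-alpha}, then yields
$$|u(x',t)-u_0(x'_0)-(x'-x'_0)\cdot\nabla_{x'} u_0(x'_0)-tu_1(x'_0)|\le CF(|x'-x'_0|^2+t^2)^{(1+\alpha)/2}\text{ in }G_{1/2}.$$

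For any $x_1=(x'_1,t_1)\in G_{1/4}$, setting $x'_0=x'_1$ and applying the scaled interior Schauder estimate to $w=u-p$ on $B_{t_1/2}(x_1)\subset G_{1/2}$, together with the $L^\infty$-bound $|w|\le CFt_1^{1+\alpha}$ on this ball and the analogous Hölder control on $Lw$ (whose vanishing-to-first-order at $(x'_1,0)$ forces $[Lw]_{C^\alpha(B_{t_1/2}(x_1))}\le CFt_1$), produces
$$|\partial_t u(x'_1,t_1)-u_1(x'_1)|=|\partial_t w(x_1)|\le Ct_1^{-1}\bigl\{|w|_{L^\infty(B_{t_1/2}(x_1))}+t_1^{2}|Lw|^*_{C^\alpha(B_{t_1/2}(x_1))}\bigr\}\le CFt_1^{\alpha}.$$
The triangle inequality combined with $u_1\in C^\alpha(B'_{1/2})$ then gives
$$|\partial_t u(x',t)-u_1(x'_0)|\le|\partial_t u(x',t)-u_1(x')|+|u_1(x')-u_1(x'_0)|\le CF(|x'-x'_0|^2+t^2)^{\alpha/2},$$
which is the desired estimate; continuity of $\partial_t u$ up to $\Sigma_r$ with boundary value $u_1$ follows at once.

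The main obstacle is the higher-order boundary decay in the middle step, specifically the simultaneous vanishing of $Lw$ and $\nabla Lw$ at $(x'_0,0)$. This uses the precise formula for $u_1$ in an essential way: the tangential vanishing comes from differentiating $f=cu_0$ along $\Sigma_1$, while the normal vanishing is exactly the content of the defining equation \eqref{eq-ch3-derivative-u-t-value}. Once these cancellations are in hand, the supersolution machinery of Lemma \ref{lemma-ch3-Linear-ComparisonFunction-t-factor}, now applied at exponent $1+\alpha$ instead of $\alpha$, plays the role that the $\alpha$-exponent supersolution played in the proof of Lemma \ref{lemma-ch3-Linear-Estimate-alpha}, and the remaining steps are a transparent combination of interior regularity and the tangential Hölder continuity of $u_1$.
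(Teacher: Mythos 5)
Your proof is correct and follows essentially the same strategy as the paper's: construct the affine polynomial $p(x',t)=u_0(x'_0)+(x'-x'_0)\cdot\nabla_{x'}u_0(x'_0)+t\,u_1(x'_0)$, verify that $L(u-p)$ vanishes to first order at $(x'_0,0)$, bound $u-p$ by the supersolution of Lemma~\ref{lemma-ch3-Linear-ComparisonFunction-t-factor} at exponent $1+\alpha$, and then transfer the $O(t^{1+\alpha})$ decay to a pointwise bound on $\partial_t u - u_1$ via scaled interior estimates on balls $B_{ct_0}(x_0)$. Two minor remarks: the paper only needs the scaled interior \emph{$C^1$ gradient} estimate in the final step (requiring $L^\infty$ control of $h=L(u-p)$), whereas you invoke the full $C^{1,\alpha}$ Schauder estimate and the H\"older seminorm bound $[L w]_{C^\alpha(B_{t_1/2}(x_1))}\le CFt_1$; this is correct but slightly more than is needed. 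Also, $u_0=f/c\in C^{1,\alpha}(\bar B'_1)$ follows directly from $f,c\in C^{1,\alpha}(\bar G_1)$, so there is no need to route through Theorem~\ref{thrm-ch3-Linear-TangentialEstimate-general}; and when applying the maximum principle it is cleaner to work on $B'_{1/2}(x'_0)\times(0,1/2)$ (as the paper does), so that the lateral boundary is at distance $1/2$ from $x'_0$ and $\psi$ is uniformly bounded below there for every $x'_0\in B'_{1/2}$.
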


By $Q(0)=c\le -c_0$ 
and $Q({1+\alpha})\le -c_{1+\alpha}$  in $G_1$, we have 
$Q(1)=b_n+c<0$ in $\bar G_1$. 
Hence, the expression in the right-hand side of \eqref{eq-ch3-derivative-u-t-value} makes sense. 

\begin{proof} The proof consists of two steps. Set 
$$F=|u|_{L^\infty(G_1)}+|f|_{C^{1,\alpha}(\bar G_1)}.$$

{\it Step 1.} We first prove a decay estimate of $u$. 
For any fixed $x_0'\in B_{1/2}'$, consider a linear function 
$$l_{x_0'}(x',t)=l_0+l_\beta( x_\beta-x_{0\beta})+l_nt,$$
for some constants $l_0, l_1, \cdots, l_{n-1}$, and $l_n$, to be determined. 
Then, 
$$L(u-l_{x_0'})=f-t(b_\beta l_\beta+b_nl_n)-cl_0-cl_\beta ( x_\beta-x_{0\beta})-cl_nt.$$
The linear part of the expression in the right-hand side at $(x_0', 0)$ is given by 
\begin{align*}
&f(x_0',0)-c(x_0',0)l_0+\big(\partial_\beta f(x_0',0)-\partial_\beta c(x_0',0)l_0-c(x_0',0)l_\beta\big)( x_\beta-x_{0\beta})\\
&\qquad +\big(\partial_tf(x_0',0)-\partial_tc(x_0',0)l_0-c(x_0',0)l_n-b_\beta(x_0',0)l_\beta-b_n(x_0',0)l_n\big)t.\end{align*}
We now make this equal to zero by choosing $l_0$, $l_\beta$, and $l_n$ successively. 
A simple computation yields 
\begin{align*} 
l_0&=\frac{f(x_0',0)}{c(x_0',0)}=u_0(x_0'),\\
l_\beta&=\frac{\partial_\beta f(x_0',0)-\partial_\beta c(x_0',0)l_0}{c(x_0',0)}=\partial_\beta u_0(x_0'),\end{align*} 
and 
$$l_n=\frac{\partial_tf(x_0',0)-\partial_tc(x_0',0)l_0-b_\beta(x_0',0)l_\beta}{b_n(x_0',0)+c(x_0',0)}=u_1(x_0'),$$
where $u_1$ is given by \eqref{eq-ch3-derivative-u-t-value}. Hence, 
$$l_{x_0'}(x',t)=u_0(x_0')+\partial_\beta u_0(x_0')( x_\beta-x_{0\beta})+u_1(x_0')t.$$
With such a choice of $l_{x_0'}$, we have 
\begin{equation}\label{eq-ch3-Linear-Estimate-1-0}
L(u-l_{x_0'})=h,\end{equation}
such that, for any $(x',t)\in  B'_{1/2}(x_0')\times (0,1/2)$, 
\begin{equation}\label{eq-ch3-Linear-Estimate-1-1}
|h(x',t)| \le CF(|x'-x_0'|^2+t^2)^{\frac{1+\alpha}{2}}.\end{equation} 
Moreover, we have, for any $x'\in  B_{1/2}'(x_0')$, 
\begin{equation}\label{eq-ch3-Linear-Estimate-1-2}
|u(x',0)-l_{x_0'}(x,0)|=|u_0(x')-u_0(x_0')-\partial_\beta u_0(x_0')( x_\beta-x_{0\beta})|\le CF|x'-x_0'|^{1+\alpha}.\end{equation}
Next, take $\psi$ as in Lemma \ref{lemma-ch3-Linear-ComparisonFunction-t-factor} with
$\sigma=0$ and $\mu=1+\alpha$. 
By applying the maximum principle as in the proof of 
Lemma \ref{lemma-ch3-Linear-Estimate-alpha}, we have, 
for any $(x,t)\in B_{1/2}'(x_0')\times (0,1/2)$,
\begin{align}\label{eq-ch3-estimate-decau-1-k}
\big|u(x',t)-l_{x_0'}(x',t)\big|
\le CF
(|x'-x_0'|^2+t^2)^{\frac{1+\alpha}{2}}.\end{align}
With $x'=x_0'$, we obtain, for any $(x',t)\in G_{1/2}$, 
$$|u(x',t)-u_0(x')-u_1(x')t|\le CFt^{1+\alpha}.$$
Dividing by $t$ and letting $t\to0$, we conclude that $\partial_tu(x',0)$ exists and is equal to $u_1(x')$, 
for any $x'\in B_{1/2}'$. 

{\it Step 2.} We prove \eqref{eq-ch3-estimate-decau-2-k}. 
We take any $x_0=(x_0', t_0)\in G_{1/2}$ and consider \eqref{eq-ch3-Linear-Estimate-1-0}
in $B_{3t_0/4}(x_0)$. Note that $t_0/4\le t\le 7t_0/4$, for any $(x', t)\in B_{3t_0/4}(x_0)$. 
Hence, 
\begin{align*}
|t^2a_{ij}|^*_{C^\alpha(B_{3t_0/4}(x_0))}
+t_0|tb_i|_{L^\infty(B_{3t_0/4}(x_0))}
+t_0^2|c|_{L^\infty(B_{3t_0/4}(x_0))}
\le Ct_0^2,\end{align*}
where $|\cdot|^*_{C^\alpha(B_{3t_0/4}(x_0))}$ is the scaled H\"older norm 
defined by \eqref{eq-ch2-scaled-Holder-norm}. 
Moreover, by \eqref{eq-ch3-Linear-Estimate-1-1}, 
$$|h|_{L^\infty(B_{3t_0/4}(x_0))}
\le CFt_0^{1+\alpha},$$
and, by \eqref{eq-ch3-estimate-decau-1-k}, 
$$|u-l_{x'_0}|_{L^\infty(B_{3t_0/4}(x_0))}\le CFt_0^{1+\alpha}.$$
We write \eqref{eq-ch3-Linear-Estimate-1-0} as
$$t_0^{-2}L(u-l_{x_0'})=t_0^{-2}h.$$
The scaled interior $C^{1}$-estimate implies 
\begin{align*}
&t_0|D(u-l_{x'_0})|_{L^\infty(B_{t_0/2}(x_0))}
\\&\qquad
\le C\big\{|u-l_{x'_0}|_{L^\infty(B_{3t_0/4}(x_0))}
+t_0^2\, t_0^{-2}|h|_{L^\infty(B_{3t_0/4}(x_0))}\big\}
\le CFt_0^{1+\alpha},
\end{align*}
and hence
$$
|D(u-l_{x'_0})|_{L^\infty(B_{t_0/2}(x_0))}
\le CFt_0^{\alpha}.
$$
By considering $\partial_t$ only and evaluating at $x_0$, we get, for any $(x',t)\in G_{1/2}$, 
$$|\partial_tu(x',t)-u_1(x')|\le CFt^\alpha.$$
By $u_1\in C^\alpha(B_1')$, we obtain \eqref{eq-ch3-estimate-decau-2-k} easily. 
Hence, $\partial_tu$ is continuous up to $\Sigma_1$. 
\end{proof}

We now make two remarks. 
First, \eqref{eq-ch3-derivative-u-t-value} and \eqref{eq-ch3-estimate-decau-2-k}
show that $\partial_tu(\cdot, 0)$ is determined by the equation. 
Second, for a fixed $x_0'\in B_{1/2}'$, the estimate \eqref{eq-ch3-estimate-decau-1-k}
actually establishes that $u$ is $C^{1,\alpha}$ at $(x_0',0)$. In fact, it holds under a weaker assumption 
that $f$ and $c$ are $C^{1,\alpha}$ at $(x_0',0)$ and $b_i$ is $C^{\alpha}$ at $(x_0',0)$. 
More generally, we can prove that $u$ is $C^{m,\alpha}$ at $(x_0',0)$ if $Q(m+\alpha)<0$ in $\bar G_1$ 
and under appropriate assumptions on the coefficients and $f$. 

We are ready to prove the regularity of normal derivatives.

\begin{theorem}\label{thrm-ch3-Linear-NormalEstimate-general}
For some integers $\ell\ge m\ge 1$ and constant 
$\alpha\in (0,1)$, assume $a_{ij}, b_i, c\in C^{\ell,\alpha}(\bar G_1)$,  
with \eqref{eq-ch3-ellipticity-t}, and $c\le -c_0$ 
and $Q({m+\alpha})\le -c_{m+\alpha}$  in $G_1$, for some positive constants 
$c_0$ and $c_{m+\alpha}$.  
For some $f\in C^{\ell,\alpha}(\bar G_1)$, let $u\in C(\bar G_1)\cap C^2(G_1)$ be 
a solution of \eqref{eq-ch3-Equ} and \eqref{eq-ch3-Dirichlet}.  
Then, for any nonnegative integers $\nu$ and $\tau$ with $\nu\le m$ and 
$\nu+\tau\le \ell$, and any $r\in (0,1)$, 
\begin{equation*}
\partial_t^\nu D^\tau_{x'}u, tD\partial_t^\nu D^\tau_{x'}u, t^2D^2\partial_t^\nu D^\tau_{x'}u
\in C^\alpha(\bar G_r),\end{equation*}
with $tD\partial_t^\nu D^\tau_{x'}u=0$ and $t^2D^2\partial_t^\nu D^\tau_{x'}u=0$ on $\Sigma_1$,
and 
\begin{align*}
&|\partial_t^\nu D^\tau_{x'}u|_{C^\alpha(\bar G_{1/2})}+|tD\partial_t^\nu D^\tau_{x'}u|_{C^\alpha(\bar G_{1/2})}
+|t^2D^2\partial_t^\nu D^\tau_{x'}u|_{C^\alpha(\bar G_{1/2})}\\
&\qquad\le C\big\{|u|_{L^\infty(G_1)}+|f|_{C^{\ell,\alpha}(\bar G_1)}\big\},
\end{align*}
where 
$C$ is a positive constant depending only on $n$, $\ell$, $\lambda$, 
$\alpha$, $c_0$, $c_{m+\alpha}$, and
the $C^{\ell,\alpha}$-norms of $a_{ij}, b_i, c$ in $\bar G_1$. In particular, 
$u\in C^{m,\alpha}(\bar G_r)$, for any $r\in (0,1)$. 
\end{theorem}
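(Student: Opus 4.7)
The plan is an induction on $m$ that reduces $(L,\ell,m)$ to $(L^{(1)},\ell-1,m-1)$ via the substitution $v=\partial_tu$. At each level, the case $\nu=0$ of the claim is supplied by the tangential regularity Theorem~\ref{thrm-ch3-Linear-TangentialEstimate-general} applied to $L$ and $u$ at index $\ell$, so it suffices to treat $\nu\ge 1$. By the derivation preceding Lemma~\ref{lemma-ch3-Linear-C{1,alpha}Estimate-normal}, $v=\partial_tu$ satisfies $L^{(1)}v=f_1$, where $L^{(1)}$ has the same uniformly degenerate elliptic form as $L$ and characteristic polynomial $Q^{(1)}(\mu)=Q(\mu+1)$. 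Lemma~\ref{lemma-ch3-Linear-C{1,alpha}Estimate-normal} provides the boundary continuity $v\to u_1$ on $\Sigma_1$, with $u_1$ given by \eqref{eq-ch3-derivative-u-t-value}, so $v$ solves a genuine Dirichlet problem for $L^{(1)}$ to which the theorem can be applied inductively.

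Two structural checks justify the reduction. First, the coefficients $a_{ij}$, $b_i+2a_{in}$, and $c+b_n$ of $L^{(1)}$ are linear combinations of those of $L$, hence remain in $C^{\ell,\alpha}(\bar G_1)$. Second, since $Q(\mu)$ is a convex quadratic in $\mu$, the inequalities $Q(0)\le -c_0$ and $Q(m+\alpha)\le -c_{m+\alpha}$ yield $Q(\mu)\le -\min(c_0,c_{m+\alpha})$ for every $\mu\in[0,m+\alpha]$. In particular, $Q^{(1)}(0)=Q(1)<0$ and $Q^{(1)}((m-1)+\alpha)=Q(m+\alpha)\le -c_{m+\alpha}$, which are exactly the conditions required to invoke the theorem with parameters $(\ell-1,m-1)$. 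The inductive hypothesis applied to $L^{(1)}$ and $v$ then delivers $\partial_t^{\nu-1}D_{x'}^\tau v=\partial_t^\nu D_{x'}^\tau u$, together with the corresponding $tD$ and $t^2D^2$ quantities, in $C^\alpha(\bar G_r)$ with the desired estimates for all $1\le \nu\le m$ and $\nu+\tau\le \ell$.

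The principal technical obstacle is verifying that $f_1\in C^{\ell-1,\alpha}(\bar G_1)$, as required by the inductive hypothesis. From \eqref{eq-ch3-equation-u-t-RHS}, $\partial_tf$ is manifestly $C^{\ell-1,\alpha}$, and each remaining term is a product of a coefficient derivative ($\partial_ta_{ij}$, $\partial_tb_i$, $\partial_tc$, $a_{i\beta}$, or $b_\beta$) with a quantity involving at most two derivatives of $u$. Expanding $D_{x'}^\sigma$ of each such term via the Leibniz rule for $\sigma\le \ell-1$ produces sums of products of a coefficient-derivative factor in $C^\alpha$ (by the hypothesis $a_{ij},b_i,c\in C^{\ell,\alpha}$) and a $u$-derivative factor of the form $D_{x'}^{\sigma'}u$, $tDD_{x'}^{\sigma'}u$, or $t^2D^2D_{x'}^{\sigma'}u$ with $\sigma'\le \ell$; each such $u$-factor lies in $C^\alpha$ by Theorem~\ref{thrm-ch3-Linear-TangentialEstimate-general}. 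The bookkeeping is routine but must be carried out carefully at each level so the reduction closes uniformly.
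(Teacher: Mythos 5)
Your reduction $(L,\ell,m)\rightsquigarrow(L^{(1)},\ell-1,m-1)$ via $v=\partial_tu$ is the natural one and matches the paper's in spirit, and your observations on the coefficients of $L^{(1)}$ and the convexity of $Q$ are correct. There is, however, a genuine gap at the step where you feed the inductive hypothesis: to invoke the theorem for $(L^{(1)},v,f_1)$ at level $(\ell-1,m-1)$ you must verify $f_1\in C^{\ell-1,\alpha}(\bar G_1)$, i.e., control of \emph{all} mixed derivatives $\partial_t^{\nu'}D_{x'}^{\tau'}f_1$ with $\nu'+\tau'\le \ell-1$. Your Leibniz verification only computes $D_{x'}^{\sigma}f_1$, i.e., pure tangential derivatives, so it does not establish what you need. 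And in fact the full regularity $f_1\in C^{\ell-1,\alpha}$ is generally false when $\ell>m$: for instance the term $\partial_t c\, u$ in \eqref{eq-ch3-equation-u-t-RHS} contributes $\partial_t c\,\partial_t^{\ell-1}u$ to $\partial_t^{\ell-1}f_1$, and $\partial_t^{\ell-1}u\in C^{\alpha}(\bar G_r)$ is not known at that point of the induction (the prior step only gives normal derivatives of $u$ up to order $m-1$); similarly $\partial_t^{\ell-1}(t^2\partial_t a_{ij}\partial_{ij}u)$ forces weighted normal derivatives $t^2\partial_t^{\ell+1}u$-type terms that lie outside the controlled range. So the induction does not close as written.

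The fix is exactly what the paper does: at each step, apply the \emph{tangential} Theorem~\ref{thrm-ch3-Linear-TangentialEstimate-general} to the equation $L^{(1)}(\partial_tu)=f_1$ rather than the full normal Theorem~\ref{thrm-ch3-Linear-NormalEstimate-general}. The tangential theorem requires only $D_{x'}^{\tau}f_1\in C^{\alpha}(\bar G_r)$ for $\tau\le \ell-1$, which is precisely what your Leibniz argument does verify (using the tangential estimates for $u$ from Theorem~\ref{thrm-ch3-Linear-TangentialEstimate-general} at index $\ell$ and $a_{ij},b_i,c\in C^{\ell,\alpha}$). Combined with the boundary continuity from Lemma~\ref{lemma-ch3-Linear-C{1,alpha}Estimate-normal} and the shift $Q^{(1)}(\mu)=Q(\mu+1)$, this yields $\partial_t D_{x'}^{\tau}u$ and the weighted derivatives in $C^{\alpha}$ for $\tau\le\ell-1$, i.e., the $\nu=1$ case; one then iterates, gaining one normal derivative per step, for $m$ steps. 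In short: your scheme and index bookkeeping are right, but the inductive call must go to the tangential theorem, not to the theorem being proved, because only tangential regularity of $f_1$ is available.
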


\begin{proof} The proof is based on an induction on $m$. We first consider $m=1$. 
We note that $\partial_tu$ is continuous up to $\Sigma_1$, by 
Lemma \ref{lemma-ch3-Linear-C{1,alpha}Estimate-normal},
and satisfies \eqref{eq-ch3-equation-u-t}, where $f_1$ is given by \eqref{eq-ch3-equation-u-t-RHS}. 
By Theorem \ref{thrm-ch3-Linear-TangentialEstimate-general}, 
\eqref{eq-ch3-regularity-tangential-linear} and 
\eqref{eq-ch3-estimate-tangential-linear} hold for any $\tau\le\ell$. 
By 
$a_{ij}, b_i, c, f\in C^{\ell,\alpha}(\bar G_1)$, we 
have 
$D_{x'}^\tau f_1\in C^\alpha(\bar G_r)$, for any $\tau\le \ell-1$ and any $r\in (0,1)$, 
and 
$$\sum_{\tau=0}^{\ell-1}
|D_{x'}^\tau f_1|_{C^{\alpha}(\bar G_r)}\le C\big\{|u|_{L^\infty(G_1)}+|f|_{C^{\ell,\alpha}(\bar G_1)}\big\}.$$
By \eqref{eq-ch3-equation-u-t-RHS}, \eqref{eq-ch3-derivative-u-t-value}, and $tDu=0$, 
$t^2D^2u=0$, and $tDD_{x'}u=0$ on $\Sigma_1$, we get
\begin{equation}\label{eq-ch3-normal-equation-condition}
\partial_tu(\cdot, 0)=u_1=\frac{f_1}{b_n+c}(\cdot, 0)\quad\text{on }B'_1.\end{equation}
By \eqref{eq-ch3-relation-P-1} and 
$Q({1+\alpha})\le -c_{1+\alpha}$ in $G_1$, 
we have $Q^{(1)}(\alpha)\le -c_{1+\alpha}$ in $G_1$. 
By applying 
Theorem \ref{thrm-ch3-Linear-TangentialEstimate-general} 
to \eqref{eq-ch3-equation-u-t} and \eqref{eq-ch3-normal-equation-condition}, 
we conclude, for any $\tau\le \ell-1$ and any $r\in (0,1)$, 
\begin{equation*}
D_{x'}^\tau\partial_t  u,\, tDD_{x'}^\tau \partial_t u,\, t^2D^2D_{x'}^\tau \partial_t u
\in C^\alpha(\bar G_r),\end{equation*}
with $D_{x'}^\tau\partial_t  u=D_{x'}^\tau u_1$, $tDD_{x'}^\tau\partial_t  u=0$, 
and $t^2D^2D_{x'}^\tau\partial_t u=0$ on $\Sigma_1$,
and 
\begin{align*}
&|\partial_t D_{x'}^\tau u|_{C^\alpha(\bar G_{1/2})}+|tD\partial_tD_{x'}^\tau u|_{C^\alpha(\bar G_{1/2})}
+|t^2D^2\partial_tD_{x'}^\tau u|_{C^\alpha(\bar G_{1/2})}\\
&\qquad\le C\big\{|u|_{L^\infty(G_1)}+|f|_{C^{\ell,\alpha}(\bar G_1)}\big\}.
\end{align*}
This is 
the desired result for $m=1$.

The proof for general $m$ is based on induction and hence omitted. 
\end{proof}

We point out that $\partial^i_tu(\cdot, 0)$ for $i=1, \cdots, m$ are determined by the equation. 

\smallskip

Theorem \ref{thrm-ch3-Linear-NormalEstimate-general} implies Theorem \ref{thrm-ch3-regularity-higher} 
easily. 

\section{Regularizations}\label{sec-existence-linear-existence}

In this section, we prove the existence of solutions to the Dirichlet problem by 
regularizing the uniformly degenerate elliptic equations. 

\begin{proof}[Proof of Theorem \ref{thrm-ch2-Existence-Dirichlet}] 
First, the maximum principle implies 
the uniqueness of solutions of  \eqref{eq-ch2-basic-equation} and 
\eqref{eq-ch2-Dirichlet} in the $C(\bar\Omega)\cap C^2(\Omega)$-category. 
We note that the equation in  \eqref{eq-ch2-basic-equation} 
is degenerate only along the boundary $\partial\Omega$. 
Hence, the maximum principle is still valid.  

Next, we prove the existence of a 
$C^\alpha(\bar\Omega)\cap 
C^{2,\alpha}(\Omega)$-solution $u$ of   
\eqref{eq-ch2-basic-equation} and \eqref{eq-ch2-Dirichlet} for the given $f$. 
Take an arbitrary $\delta>0$ and set 
$$L_\delta=L+\delta\Delta.$$ 
Then, the operator $L_\delta$ is uniformly 
elliptic in $\bar\Omega$, with $C^\alpha(\bar\Omega)$-coefficients. 
By the Schauder theory, there exists a unique solution 
$u_\delta\in C(\bar\Omega)\cap C^{2,\alpha}(\Omega)$ of 
the Dirichlet problem 
\begin{align}\label{eq-ch2-Dirichlet-epsilon}\begin{split}
L_\delta u_\delta&=f\quad\text{in }\Omega,\\
u_\delta&=\frac{f}{c}\quad\text{on }\partial\Omega.
\end{split}\end{align}
We point out that $\partial\Omega$ and $(f/c)|_{\partial\Omega}$ are not assumed to be $C^{2,\alpha}$. 
We now derive estimates of $u_\delta$, independent of $\delta$. 
The proof consists of three steps. For convenience, we set 
$$F=|f|_{C^\alpha(\bar\Omega)}.$$

{\it Step 1.} We derive an $L^\infty$-estimate of $u_\delta$. 
In fact, a standard argument based on the maximum principle yields 
\begin{align*} |u_\delta|_{L^\infty(\Omega)}\le C|f|_{L^\infty(\Omega)}
\le CF.\end{align*}

{\it Step 2.} We derive a decay estimate near the boundary. 
We can construct homogeneous supersolutions for $L_\delta$ similarly as in 
Lemma \ref{lemma-ch3-Linear-ComparisonFunction-t-factor} and prove, 
for any $x\in\Omega$ and any $x_0\in \partial\Omega$, 
\begin{equation*}\label{eq-degree-alpha-0a}|u_\delta(x)-u_\delta(x_0)|\le CF|x-x_0|^\alpha.\end{equation*}

{\it Step 3.} We derive a weighted $C^{2,\alpha}$-estimate of $u_\delta$. 
Similarly as Theorem \ref{thrm-ch3-Linear-Regularity-0-alpha}, we prove 
$u_\delta, \rho\nabla u_\delta, \rho^2\nabla^2 u_\delta\in C^\alpha(\bar\Omega)$, 
with  $\rho\nabla u_\delta=0$ and $\rho^2\nabla^2 u_\delta=0$ on $\partial\Omega$, 
and 
\begin{equation*}\label{eq-estimate-u-epsilon-boundary}
|u_\delta|_{C^\alpha(\bar\Omega)}+|\rho\nabla u_\delta|_{C^\alpha(\bar\Omega)}
+|\rho^2\nabla^2 u_\delta|_{C^\alpha(\bar\Omega)}\le CF.\end{equation*}

We point out that the positive constants $C$ in Steps 1-3 
are independent of $\delta$. As a consequence, 
there exists a sequence $\delta=\delta_i\to 0$ such that 
$$u_\delta\to u,\quad \rho\nabla u_\delta\to u^{(1)}, \quad
\rho^2\nabla^2u_\delta\to u^{(2)}\quad\text{uniformly in }\bar\Omega,$$
for some $u\in C^\alpha(\bar\Omega)$, $u^{(1)}\in C^\alpha(\bar\Omega; \mathbb R^n)$, 
and $u^{(2)}\in C^\alpha(\bar\Omega; \mathbb R^{n\times n})$, 
with $u=f/c$, $u^{(1)}=0$, and $u^{(2)}=0$ on $\partial\Omega$. 
A simple argument shows $u\in C^{2,\alpha}(\Omega)$, $\rho\nabla u=u^{(1)}\in C^\alpha(\bar\Omega)$, 
and $\rho^2\nabla^2 u=u^{(2)}\in C^\alpha(\bar\Omega)$, with $\rho\nabla u=0$ and $\rho^2\nabla^2u=0$ 
on $\partial\Omega$. Therefore, 
$$\nabla u_\delta\to \nabla u, \quad
\nabla^2u_\delta\to \nabla^2u\quad\text{uniformly in any compact subsets of }\Omega.$$
By letting $\delta\to 0$ in \eqref{eq-ch2-Dirichlet-epsilon}, we have that $u$ is a solution of 
\eqref{eq-ch2-basic-equation} and \eqref{eq-ch2-Dirichlet}. 
\end{proof}


\begin{thebibliography}{DG}



%\bibitem{Anderson2003} M. Anderson, \emph{Boundary regularity, uniqueness and 
%non-uniqueness for AH Einstein metrics on 4-manifolds}, 
%Adv. Math., 179(2003), 205-249. 

%\bibitem{Anderson2008}  M. Anderson, 
%\emph{Einstein metrics with prescribed conformal infinity on 4-manifolds}, 
%Geom. Funct. Anal., 18(2008), 305-366.

\bibitem{Andersson1993} L. Andersson, 
\emph{Elliptic systems on manifolds with asymptotically negative curvature}, Indiana
Univ. Math. J., 42(1993), 1359-1388.

\bibitem{AnderssonChrusciel1996}
L. Andersson, P. Chru\'sciel,  
\emph{Solutions of the constraint equations in general relativity
satisfying ``hyperboloidal boundary conditions"}, Dissertationes Math. (Rozprawy Mat.)
355(1996), 1-100.

\bibitem{ACF1992CMP} L. Andersson, P. Chru\'sciel, H. Friedrich, 
\emph{On the regularity of solutions to the Yamabe equation and the existence of 
smooth hyperboloidal initial data for Einstein's field equations}, 
Comm. Math. Phys., 149(1992), 587-612. 


%\bibitem{Brandle&Marcus1995} C. Brandle,  M. Marcus,
%{\it Asymptotic behaviour of solutions and their derivatives, 
%for semililnear eliptic problems with blowup on the boundary},
%Annales de l'I. H. P., section C, 2(1995), 155-171.


\bibitem{AvilesMcOwen1988} P. Aviles, R. C. McOwen, 
\emph{Complete conformal metrics with negative scalar curvature in compact Riemannian manifolds}, 
Duke Math. J., 56(1988), 395-398.

\bibitem{OB2000} O. Biquard, \emph{M\'{e}triques d'Einstein asymptotiquement sym\'{e}triques}, Ast\'{e}risque, 265(2000).


\bibitem{OB2002} O. Biquard, \emph{M\'{e}triques autoduales sur la boule}, Invent. Math., 148(2002), 545-607. 


\bibitem{OB2016} O. Biquard, \emph{D\'{e}singularisation de m\'{e}triques d'Einstein II}, Invent. Math., 204(2016), 473-504. 


%\bibitem{Biquad2010} 
%O. Biquard, M. Herzlich, 
%\emph{Analyse sur un demi-espace hyperbolique et 
%poly-homogeneite locale},  arXiv:1002.4106.

\bibitem{Caffarelli-Cabre1995} L. Caffarelli, X. Cabr\'{e}, 
\emph{Fully Nonlinear Elliptic Equations}, 
Amer. Math. Soc., Providence, 1995. 

%\bibitem{ChangQingYang2000} S.-Y. Chang, J. Qing, P. Yang, 
%\emph{Compactification of a class of conformally flat
%4-manifolds}, Invent. Math., 147(2000), 65-93.

\bibitem{ChenLi2023} R. Chen, S. Li, and J. Luo, 
\emph{On the asymptotic expansions of the proper harmonic maps between balls in Bergman metrics},
J. Geom. Anal., 33(2023), Paper No. 114, 48.

\bibitem{ChengYau1975} S.-Y. Cheng, S.-T. Yau, 
\emph{Differential equations on Riemannian manifolds and their geometric applications}, 
Comm. Pure Appl. Math., 28(1975), 333-354. 

\bibitem{ChengYau1977} S.-Y. Cheng, S.-T. Yau, 
\emph{On the regularity of the Monge-Amp\`ere equation $\det(\partial^2u/\partial x_i\partial x_j)=F(x,u)$}, 
Comm. Pure Appl. Math., 30(1977), 41-68.

\bibitem{ChengYau1980CPAM} S.-Y. Cheng, S.-T. Yau, 
\emph{On the existence of a complete K\"ahler metric on non-compact complex 
manifolds and the regularity of Fefferman's equation}, 
Comm. Pure Appl. Math., 33(1980), 507-544. 



%\bibitem{ChangQingYang2004} S.-Y. A. Chang, J. Qing, P. Yang, 
%\emph{On the topology of conformally compact Einstein 4-manifolds. 
%Noncompact problems at the intersection of geometry, analysis, and topology}, 49-61, 
%Contemp. Math., 350, Amer. Math. Soc., Providence,  2004.


\bibitem{Chrusciel2005} 
P. Chru\'sciel, E. Delay, J. Lee, D. Skinner, 
\emph{Boundary regularity of conformally compact 
Einstein metrics}, J. Diff. Geom., 69(2005), 
111-136.

\bibitem{Donnelly1994} H. Donnelly, 
\emph{Dirichlet problem at infinity for harmonic maps: rank one symmetric spaces}, 
Trans. Amer. Math. Soc., 344(1994), 713-735.

\bibitem{Fefferman1974} C. Fefferman, 
\emph{The Bergman kernel and biholomorphic mappings of pseudoconvex domains},  
Invent. Math., 26(1974), 1-65.

\bibitem{Fefferman1976} C. Fefferman, 
\emph{Monge-Amp\`ere equation, the Bergman kernel, and geometry of pseudoconvex domains}, 
Ann. Math., 103(1976), 395-416. 

\bibitem{FeffermanGraham1985} C. Fefferman, C. R.  Graham,  
\emph{Conformal invariants}, 
in {\em The mathematical heritage of \'Elie Cartan}, Ast\'{e}risque 1985, 95-116.

\bibitem{FeffermanGraham2002} C. Fefferman, C. R. Graham, 
\emph{$Q$-curvature and Poincar\'e metrics}, Math. Res. Lett., 9(2002), 139-151. 

\bibitem{FeffermanGraham2012} C. Fefferman, C. R. Graham, 
\emph{The Ambient Metric}, Annals of Mathematics Studies, 178, Princeton University Press, 
Princeton, 2012.

%\bibitem{GH1980} D. Gilbarg, L. H\"ormander, 
%\emph{Intermediate Schauder estimates}, Arch. Rational Mech. Anal., 74(1980), 297-318. 

\bibitem{GT1983} D. Gilbarg, N. Trudinger,
{\it Elliptic Partial Differential Equations of Elliptic Type},
Springer, Berlin, 1983.

%\bibitem{GGHW201?} M. Glaros, R. Gover, M. Halbasch, A. Waldron, 
%\emph{Singular Yamabe problem Willmore energies}, arXiv:1508.01838. 


\bibitem{GonzalezLi2018} M. d. M. Gonz\'alez, Y.-Y. Li,  L. Nguyen,
\emph{Existence and uniqueness to a fully nonlinear version of the Loewner-Nirenberg problem}, 
Commun. Math. Stat., 6(2018),  269-288.

%\bibitem{Gover-Waldron2017CMP}
%R. Gover, A. Waldron,  \emph{Renormalized volume},
%Comm. Math. Phys., 354(2017), 1205-1244. 

\bibitem{Graham1983-1} C. R. Graham, 
\emph{The Dirichlet problem for the Bergman Laplacian. I}, 
Comm. P. D. E., 8(1983),  433-476. 

\bibitem{Graham1983-2} C. R. Graham, 
\emph{The Dirichlet problem for the Bergman Laplacian. II}, 
Comm. P. D. E.,  8(1983), 563-641. 

\bibitem{Graham2000} C. R. Graham, 
\emph{Volume and area renormalizations for conformally compact Einstein
metrics}, Rend. Circ. Mat. Palermo, II. Ser., Suppl. 63(2000), 31-42.

\bibitem{GrahamLee1991} C. R. Graham, J. M. Lee,  \emph{Einstein metrics with prescribed conformal infinity 
on the ball}, Adv. Math., 87(1991), 186-225. 

\bibitem{Graham&Witten1999} C. R. Graham, E. Witten, 
\emph{Conformal anomaly of submanifold observables in AdS/CFT correspondence}, 
Nuclear Physics B, 546(1999), 52-64. 

%\bibitem{Graham&Zworski} C. R. Graham, M. Zworski, 
%\emph{Scattering matrix in conformal geometry}, 
%Invent. Math., 152(2003), 89-118. 

%\bibitem{Guan2008} B. Guan, 
%\emph{Complete conformal metrics of negative Ricci curvature on compact manifolds with boundary}, 
%Int. Math. Res. Not. IMRN, (2008), pp. Art. ID rnn 105, 25.

%\bibitem{Gursky-Han17GAFA} M. Gursky, Q. Han, 
%\emph{Non-existence of Poincar\'e-Einstein manifolds with prescribed conformal infinity}, 
%Geom. Funct. Anal., 27(2017), 863-879. 

%\bibitem{Gursky-Han-Stolz2018} M. Gursky, Q. Han, S. Stolz, 
%\emph{An invariant related to the existence of conformally compact Einstein fillings}, 
%arXiv:1801.04474. 

%\bibitem{M.Gursky1}  M. Gursky, J. Streets, M.  Warren,
%\emph{Existence of complete conformal metrics of negative Ricci
%curvature on manifolds with boundary}, Cal. Var. \& PDE,
%41(2011) 21-43.

\bibitem{HanJiang2023} Q. Han, X. Jiang, 
\emph{Boundary regularity of minimal graphs in the hyperbolic space}, 
Journal f\"ur die reine und angewandte Mathematik (Crelles Journal), 
Vol. 2023, no. 801(2023), 239-272. 

\bibitem{HanJiang2024} Q. Han, X. Jiang, 
\emph{Asymptotics and convergence for the complex Monge-Amp\`ere equation}, 
Ann. PDE, 10(2024), Paper No. 8, 64.


%\bibitem{Han-Jiang-Shen2023} 
%Q. Han, X. Jiang, W. Shen, {\it The Loewner-Nirenberg problem in cones}, preprint.


\bibitem{HanShenWang16CalVar} Q. Han, W. Shen, Y. Wang, 
{\it Optimal regularity of minimal graphs in the hyperbolic 
space}, Cal. Var. \& P. D. E., 55(2016), no. 1, Art. 3, 19pp. 

\bibitem{HanWang2022} 
Q. Han, Y. Wang, {\it Boundary expansions for constant mean curvature surfaces in the hyperbolic space},  
Math. Z., 300(2022), 851-879.


\bibitem{Hellimell2008} D. Helliwell, 
\emph{Boundary regularity for conformally compact Einstein metrics in 
even dimensions},
Comm. P.D.E., 33(2008),  
842-880.

%\bibitem{Jeffres&Mazzeo2016} 
%T. Jeffres, R. Mazzeo, Y. Rubinstein, 
%\emph{K\"ahler-Einstein metrics with edge singularities}, Ann. Math., 183(2016), 95-176.

\bibitem{JianLuWang2022China} H. Jian, J. Lu, and X.-J. Wang, 
\emph{A boundary expansion of solutions to nonlinear singular elliptic equations}, 
Sci. China Math., 65(2022), 9-30.


\bibitem{JianWang2013JDG} H. Jian, X.-J. Wang, 
{\it Bernstein theorem and regularity for a class of Monge-Amp\`{e}re equations}, 
J. Diff. Geom., 93(2013), 431-469. 

\bibitem{JianWang2014Adv} H. Jian, X.-J. Wang, {\it Optimal boundary regularity
for nonlinear singular elliptic equations}, Adv. Math., 251(2014), 111-126. 

\bibitem{JianWangZhao2017JDE}
H. Jian, X.-J. Wang,  Y. Zhao, 
\emph{Global smoothness for a singular Monge-Amp\`ere equation}, 
J. Diff. Eq., 263(2017), 7250-7262.

\bibitem{Kichenassamy2004JFA}
S. Kichenassamy, \emph{Boundary blow-up and degenerate equations},
J. Funct. Anal., 215(2004), 271-289.

\bibitem{Kichenassamy2005JFA}
S. Kichenassamy, \emph{Boundary behavior in the Loewner-Nirenberg problem},
J. Funct. Anal., 222(2005), 98-113.

\bibitem{Kichenassamy2007Birkhauser}
S. Kichenassamy, 
\emph{Fuchsian Reduction. Applications to Geometry, Cosmology, and Mathematical Physics}, 
Progress in Nonlinear Differential Equations and their Applications, 71, 
Birkh\"auser, Boston, 2007. 

%\bibitem{Kohn&Nirenberg1967} 
%J. J. Kohn, L. Nirenberg, \emph{Degenerate elliptic-parabolic equations of second order}, 
%Comm. Pure Appl Math.,  20(1967), 797-872.

\bibitem{LasryLions1989}
J.-M. Lasry, P.-L. Lions, 
\emph{Nonlinear elliptic equations with singular boundary conditions 
and stochastic control with state constraints. I. The model problem}, 
Math. Ann., 283(1989), 583-630.

\bibitem{Lee1995}  J. Lee, 
\emph{The spectrum of an asymptotically hyperbolic Einstein manifold}, 
Comm. Anal. Geom., 3(1995), 253-271.


\bibitem{Lee2006}  J. Lee, 
\emph{Fredholm Operators and Einstein Metrics on Conformally Compact Manifolds}, 
Mem. A. M. S., Providence, 2006.

\bibitem{LeeMelrose1982} J. Lee, R. Melrose, 
\emph{Boundary behavior of the complex Monge-Amp\`ere equation}, 
Acta Math., 148(1982), 159-192. 

\bibitem{LeonoriPorretta2011}
T. Leonori, A. Porretta, 
\emph{Gradient bounds for elliptic problems singular at the boundary}, 
Arch. Ration. Mech. Anal., 202(2011), 663-705.

\bibitem{LiTam1991}
P. Li, L.-F. Tam, \emph{The heat equation and harmonic maps of complete manifolds}, 
Invent. Math., 105(1991), 1-46.

\bibitem{LiTam1993}
P. Li, L.-F. Tam, \emph{Uniqueness and regularity of proper harmonic maps}, 
Ann. of Math., 137(1993), 167-201.

\bibitem{LiTam1993Ind}
P. Li, L.-F. Tam, \emph{Uniqueness and regularity of proper harmonic maps. II}, 
Indiana Univ. Math. J., 42(1993), 591-635.

\bibitem{LiNi2000}
S. Li, L. Ni, 
\emph{On the holomorphicity of proper harmonic maps between unit balls with the Bergman metrics}, Math. Ann., 316(2000), 333-354.

\bibitem{LiSimon2007}
S. Li, E. Simon, 
\emph{On proper harmonic maps between strictly pseudoconvex domains 
with K\"ahler metrics of Bergman type}, Asian J. Math., 11(2007), 251-275.

\bibitem{LiNguyenXiong2023} Y. Li, L. Nguyen, J.  Xiong, 
\emph{Regularity of viscosity solutions of the $\sigma_k$-Loewner-Nirenberg problem}, 
Proc. Lond. Math. Soc., 127(2023),  1-34.


\bibitem{Lin1989Invent} F.-H. Lin,
{\it On the Dirichlet problem for minimal graphs in hyperbolic space},
Invent. Math., 96(1989), 593-612.

\bibitem{Lin2012Invent} F.-H. Lin,
{\it Erratum: On the Dirichlet problem for minimal graphs in hyperbolic space},
Invent. Math., 187(2012), 755-757.

\bibitem{Loewner&Nirenberg1974} C. Loewner, L. Nirenberg, 
\emph{Partial differential equations invariant under conformal or projective transformations}, 
Contributions to Analysis, 245-272, Academic Press, New York, 1974. 

%\bibitem{Mazzeo1988} R. Mazzeo, 
%\emph{The Hodge cohomology of conformally compact metrics}, 
%J. Diff. Geom., 28(1988), 309-339.

\bibitem{Mazzeo1991CPDE} R. Mazzeo, \emph{Elliptic theory of differential edge operators I}, 
Comm. P.D.E., 16(1991), 1615-1664. 

\bibitem{Mazzeo1991} R. Mazzeo, 
\emph{Regularity for the singular Yamabe problem}, Indiana Univ. Math. Journal, 40(1991), 1277-1299.

\bibitem{MazzeoMelrose1987} R. Mazzeo, R. Melrose, 
\emph{Meromorphic extension of the resolvent on complete spaces with asymptotically 
negative curvature}, J. Funct. Anal., 75(1987), 260-310. 

%\bibitem{MazzeoPacard2006} R. Mazzeo, F. Pacard, 
%\emph{Maskit combinations of Poincar\'{e}-Einstein metrics}, 
%Adv. Math., 204(2006), 379-412. 


\bibitem{Melrose1981} R. Melrose, \emph{Transformation of boundary problems}, 
Acta Math., 147(1981), 149-236.

%\bibitem{Melrose-Mendoza1983} R. Melrose, G. Mendoza, 
%\emph{Elliptic pseudodifferential operators of totally
%characteristic type}, MSRI preprint, 1983.

%\bibitem{Qing2003} J. Qing, 
%\emph{On the rigidity for conformally compact Einstein manifolds}, Int. Math. Res. Not. 2003, 1141-1153.


\bibitem{Tonegawa1996MathZ} Y. Tonegawa, 
\emph{Existence and regularity of constant mean curvature hypersurfaces in hyperbolic space}, 
Math. Z., 221(1996), 591-615. 

%\bibitem{Trudinger1968} N. Trudinger, 
%\emph{Remarks concerning the conformal deformation of Riemannian structures oncompact manifolds}, 
%Ann. Scuola Norm. Sup. Pisa,  22(1968), 265-274.

%\bibitem{Yau1975} S.-T. Yau, \emph{Harmonic functions on complete Riemannian manifolds}, 
%Comm. Pure Appl. Math., 28(1975), 201-228.

\bibitem{Yin2007} H. Yin, 
\emph{Boundary regularity of harmonic maps from hyperbolic space into nonpositively curved manifolds}, 
Pacific J. Math., 232(2007), 491-509.

\end{thebibliography}
\end{document}